\documentclass[a4paper,11pt]{article}
\usepackage{epsfig,amsmath}
\usepackage[french,english]{babel}
\usepackage[T1]{fontenc}
\usepackage{amsthm}
\usepackage{amssymb}
\usepackage{multicol}
\usepackage{enumerate}
\usepackage[latin1]{inputenc}
\usepackage{pstricks,pst-node,pst-text,pst-plot,pst-poly}
\setlength{\textwidth}{175mm}
\setlength{\textheight}{255mm}
\setlength{\oddsidemargin}{-10mm}
\setlength{\topmargin}{-15mm}
\setlength{\parskip}{0.2cm}
\newcommand{\ds}{\displaystyle}

\renewcommand{\leq}{\leqslant}
\renewcommand{\geq}{\geqslant}

%
\theoremstyle{definition}

\newtheorem{theorem}{Theorem}[section]
\newtheorem{lemma}[theorem]{Lemma}
\newtheorem{sublemma}[theorem]{Sublemma}
\numberwithin{equation}{section}
%
%

\date{June 2013}    
\begin{document}
\def\og{\leavevmode\raise.3ex\hbox{$\scriptscriptstyle\langle\!\langle$~}}
\def\fg{\leavevmode\raise.3ex\hbox{~$\!\scriptscriptstyle\,\rangle\!\rangle$}}
%
\title{Lower bound for the maximum of some derivative of Hardy's~function}
\author{Philippe Blanc\footnote{E-mail: {\tt philippe.blanc@heig-vd.ch}}}

\maketitle 
\begin{center}
{Haute École d'Ingénierie et de Gestion\\ CH-1400 Yverdon-les-Bains\\}
\end{center}

\noindent
{\bf Keywords} : Riemann zeta function, Distribution of zeros, Hardy's function.

\vspace*{0.25cm}

\begin{abstract}
\noindent  Under the Riemann hypothesis, we use the distribution of zeros of the zeta function to get a lower bound for the maximum of some derivative of Hardy's function.
\end{abstract}
\section{Introduction and main results} 
\label{intro}
 To situate the problem we address here, we recall some classical results on the zeros of the Riemann zeta function.\\
We denote as usual by $Z$ the Hardy function whose real zeros coincide with the zeros of $\zeta $ located on the line of real part $\frac{1}{2}$. If the Riemann hypothesis is true, what we assume from now on, then the number of zeros of $Z$ in the interval $ ] 0, t] $ is given by [9]
\begin{equation} \label{eq:N}
N(t)=\frac{t}{2 \pi}\log \left( \frac{t}{2 \pi e}\right) +\frac{7}{8}+S(t)+ O \left( \frac {1}{t}\right)
\end{equation}
where $\displaystyle S(t)=\frac{1}{\pi}\arg \zeta(\frac{1}{2}+it)$ if $t$ is not a zero of $Z$ and $\arg \zeta(\frac{1}{2}+it)$ is defined by continuous variation along the straight lines joining $2$, $2+it$ and $\frac{1}{2}+it$ starting with the initial value $\arg \zeta(2)=0$. If $t$ is a zero of $Z$ we set $S(t)=\lim_{\epsilon \to 0_+}S(t+\epsilon)$. It is well known that
\[
\displaystyle  S(t)=\displaystyle O\Big{(}\,\frac{\log t }{\log \log t}\,\Big{)}\,.
\]
Given $T$ such that $Z(T)\neq 0$ we denote by $\gamma_k$ the real zeros of Hardy's function numbered so that $\ldots\leq\gamma_{-2}\leq\gamma_{-1}<T<\gamma_{1}\leq \gamma_{2}\leq \ldots$ Using the bound [3]
\[
\vert S(t+h)-S(t)\vert \leq  \displaystyle \Big{(}\frac{1}{2}+o(1)\Big{)}\,\frac{\log t }{\log \log t}
\]
for $0<h\leq \sqrt{t}$ where $t$ is sufficiently large, we infer from (\ref{eq:N}) that
\begin{equation} \label{eq:gamma}
\mid \gamma_{\pm k}-T \mid \leq (k-1)\frac{\pi}{\log \sqrt{\frac{T}{2\pi}}}+\Big{(}\pi + o(1)\Big{)}\frac{1}{\log\log T}\hspace{3mm}\mbox{for }k=1,2,\cdots,l
\end{equation}
where $\displaystyle l=\lfloor \sqrt{T}\rfloor$ and $T$ is large. As a consequence of this relation and without using any other properties of the zeta function we prove :
\begin{theorem}\label{mainth}
If the Riemann hypothesis holds, then for any fixed $ C > (2\log 2)^{-1}$ and any sufficiently large~$T$, there exists $k\in\{1,3,5,\ldots,2m-1,2m\}$, where $m= \lfloor C \log T\log\log T\rfloor$, such that
\[
\max_{t\in [T-2\pi,T+2\pi]}\vert Z^{(k)}(t)\vert\geq \,\Big{(}1-\frac{\log\log\log T}{\log\log T}\Big{)}^k \Big{(}\log \sqrt{\frac{T}{2\pi}}\;\Big{)}^k\,\vert Z(T)\vert\,.
\]
\end{theorem}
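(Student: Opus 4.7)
The proof argues by contradiction, assuming that for every $k\in\{1,3,\ldots,2m-1,2m\}$ the inequality $\max_{t\in[T-2\pi,T+2\pi]}|Z^{(k)}(t)|<(1-\epsilon)^{k}L^{k}|Z(T)|$ holds, where $L=\log\sqrt{T/(2\pi)}$ and $\epsilon=\log\log\log T/\log\log T$. My aim is to derive a contradiction from the explicit distribution of zeros near $T$ furnished by~(\ref{eq:gamma}).

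The main tool is the Newton divided difference. For each $j\in\{1,\ldots,m\}$, the $2j$-point divided difference of $Z$ at $\{T,\gamma_{\pm 1},\ldots,\gamma_{\pm(j-1)},\gamma_{j}\}$ equals $Z^{(2j-1)}(\xi_{j}^{+})/(2j-1)!$ for some $\xi_{j}^{+}$ in the convex hull of the nodes; since $Z$ vanishes at all the $\gamma$'s, the divided difference also equals an explicit expression of the form $\pm Z(T)/[(T-\gamma_{j})P_{j-1}]$, where $P_{j-1}=\prod_{i=1}^{j-1}|T-\gamma_{i}||T-\gamma_{-i}|$. The analogous identity at $\{T,\gamma_{\pm 1},\ldots,\gamma_{\pm(j-1)},\gamma_{-j}\}$ produces $\xi_{j}^{-}$, and the $(2j+1)$-point symmetric identity yields $|Z^{(2j)}(\xi_{j}^{s})|=(2j)!|Z(T)|/P_{j}$. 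By~(\ref{eq:gamma}), for $j$ up to a threshold $r\sim L$ the convex hull $[\gamma_{-j},\gamma_{j}]$ remains inside $[T-2\pi,T+2\pi]$, so the contradiction hypothesis applies to these interior values and the identities become lower bounds of the form $P_{j}P_{j-1}>((2j-1)!)^{2}/\ell^{4j-2}$ with $\ell=L(1-\epsilon)$, as well as $P_{m}>(2m)!/\ell^{2m}$ from $k=2m$.

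The next step is to compare these lower bounds with the upper bounds on the same products that follow from~(\ref{eq:gamma}) itself. Writing $|T-\gamma_{\pm i}|\leq(\pi/L)(i-1+\beta)$ with $\beta=(1+o(1))L/\log\log T$ gives $P_{j}\leq(\pi/L)^{2j}\bigl(\Gamma(j+\beta)/\Gamma(\beta)\bigr)^{2}$. Multiplying the $m$ odd-order inequalities together and combining with the even-order bound at $k=2m$, I obtain a global inequality which, after taking logarithms and applying Stirling's formula, reduces to an asymptotic comparison in which the critical ratio is $\binom{2m}{m}\sim 4^{m}/\sqrt{\pi m}$. For the contradiction to close, one needs $2m\log 2$ to exceed $(1+o(1))\log T\log\log T$, i.e.\ $C>1/(2\log 2)$, which is precisely the hypothesis on $C$.

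The principal obstacle is twofold. First, the divided-difference witness $\xi_{j}$ sits in $[\gamma_{-j},\gamma_{j}]$, which for large $j$ extends beyond $[T-2\pi,T+2\pi]$; so one must either truncate the argument at the largest $j$ for which the hull fits (relying on $k=2m$ to cover the remainder) or perform a Markov/Bernstein-type comparison to transfer the contradiction hypothesis to a larger interval. Second, the $\beta$-dependent correction arising from the exceptionally close zeros $\gamma_{\pm 1},\ldots,\gamma_{\pm\lfloor\beta\rfloor}$ (each within $O(1/\log\log T)$ of $T$ by~(\ref{eq:gamma})) must be tracked precisely so as not to swamp the main exponential term $4^{m}$; the slack factor $(1-\epsilon)^{k}$ in the statement is calibrated to absorb this subleading loss without disturbing the leading inequality.
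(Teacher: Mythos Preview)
Your proposal has a genuine gap, and it is the one you yourself flag at the end but do not resolve. The divided-difference identity for $2j$ nodes $\{T,\gamma_{\pm1},\ldots,\gamma_{\pm(j-1)},\gamma_j\}$ places the witness $\xi_j$ somewhere in $[\gamma_{-j},\gamma_j]$, and by~(\ref{eq:gamma}) this interval has half-length $(j-1)\pi/L+O(1/\log\log T)$. For this to stay inside $[T-2\pi,T+2\pi]$ you need $j\lesssim 2L\sim\log T$, whereas the argument must run up to $j=m\sim C\log T\log\log T$. For all but a vanishing fraction of the indices $j$ the contradiction hypothesis simply does not apply to $Z^{(2j-1)}(\xi_j)$. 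Neither of your suggested fixes works: truncating at $r\sim 2L$ leaves you with far too few inequalities to force the central-binomial contradiction (the product of the first $r$ of them is smaller than the upper bound on $P_r$ by a wide margin), and the single even-order bound at $k=2m$ cannot be exploited through divided differences either, since a $(2m+1)$-node scheme would again require nodes $\gamma_{\pm m}$ lying at distance $\sim C\pi\log\log T$ from $T$. A Markov--Bernstein transfer is not available because $Z$ is not a polynomial and its growth on longer intervals is exactly what is at stake.

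The paper's route is structurally different and is designed precisely to decouple the \emph{number of zeros used} from the \emph{order of derivatives used}. After the rescaling $f(t)=Z(T+t/L)/Z(T)$, only $n=\lfloor C\log T\rfloor$ zeros on each side are retained, and the half-interval $a=(n-\frac12)\pi/c$ satisfies $a/L<2\pi$ for large $T$, so $[-a,a]$ corresponds to a subinterval of $[T-2\pi,T+2\pi]$. The core device is not a divided difference but the Euler--Maclaurin-type identity of Lemmas~\ref{1} and~\ref{3},
\[
f(0)=\sum_{k=1}^{m}f^{(2k-1)}(a)\Psi_{2k-1}^{*}(a)-\sum_{k=1}^{m}f^{(2k-1)}(-a)\Psi_{2k-1}^{*}(-a)-\int_{-a}^{a}f^{(2m)}(x)\Psi_{2m-1}^{*}(x)\,dx,
\]
in which every derivative is evaluated either at $\pm a$ or on $[-a,a]$, regardless of how large $m$ is. The zeros enter only through the kernels $\Psi_{2k-1}^{*}$, and the special choice $\mu_k\propto\prod_{j\ne k}\bigl(\sin(\pi x_k/2a)-\sin(\pi x_j/2a)\bigr)^{-1}$ together with the Bernoulli-polynomial positivity of Lemma~\ref{4} is what makes the boundary sums and the integral remainder controllable (Lemmas~\ref{5}--\ref{7}). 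This machinery is what allows derivatives of order up to $2m\gg n$ to be brought to bear while staying inside the prescribed window; your divided-difference scheme lacks any analogue of it.
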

\noindent Theorem \ref{mainth} is a consequence of the following result.
\begin{theorem}\label{genth}
 Let $f \in C^{\infty}(\mathbb{R})$ satisfying $f(0)=1$ and vanishing at $x_k$ where the $x_k$ are numbered taking into account their multiplicity and $x_{-n}\leq \ldots\leq x_{-1}<0<x_{1}\leq \ldots \leq x_n$ and let $s$ such that
\[
\vert x_{\pm k}\vert \leq (k-1)\pi +s \hspace{3mm}\mbox{for}\hspace{3mm}k=1, 2,\ldots,n.
\]
We assume that there exists a constant $0<c<1$ such that $-a<x_{-n}\leq \ldots\leq x_{n}<a$ where $a=(n-\frac{1}{2})\frac{\pi}{c}$ and 
\[
\vert f^{(2j-1)}(\pm a)\vert \leq c^{2j-1} \hspace{3mm}\mbox{for}\hspace{3mm} j=1,\ldots,m \hspace{3 mm} \mbox{and}\hspace{3mm}\max_{x\in[-a,a]}\vert f^{(2m)}(x)\vert\leq c^{2m}
\]
for some integer $m\geq n\log n$. Then for any sufficiently large $n$ and any $0<\epsilon < \log 2$ there exists $0<c_{\epsilon}<1$ depending only on $\epsilon$ such that if $c_{\epsilon}<c<1-\frac{1}{2n}$ then
\begin{equation} \label{eq:lower}
s\geq (\log 2 - \epsilon)\,\frac{1-c}{\vert \log (1-c)\vert}\,n\,\pi\,.
\end{equation}
\end{theorem}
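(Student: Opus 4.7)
The hypothesis that $|f^{(2j-1)}(\pm a)| \leq c^{2j-1}$ for $j = 1, \ldots, m$ together with $\max_{[-a,a]} |f^{(2m)}| \leq c^{2m}$ says that, on $[-a,a]$, the function $f$ behaves as if it were the restriction of an entire function of exponential type $c$. The archetype is $f(x) = \cos(cx)$: it satisfies $f(0) = 1$, its $2n$ zeros in $(-a, a)$ sit precisely at $\pm(k - \tfrac{1}{2})\pi/c$ for $k = 1, \ldots, n$, and the choice $a = (n - \tfrac{1}{2})\pi/c$ makes $\cos(ca) = 0$ and $|\sin(ca)| = 1$. My plan is to exploit this near-band-limited character of $f$ to show that, if its $2n$ prescribed zeros were squeezed into $[-(n-1)\pi - s, (n-1)\pi + s]$ with $s$ too small, a quantitative integral identity would force a contradiction.

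The heart of the argument would be to rewrite $f(0) = 1$ as $\int_{-a}^{a} f(x)\,\Phi(x)\,dx + E$ for a carefully chosen kernel $\Phi$, and then apply $2m$ integrations by parts. I would take $\Phi$ of the form $\sin(cx)\,\psi(x)$, with $\psi$ a polynomial or rational function whose zero/pole pattern is adapted to $\{x_k\}$ so that $\int f\Phi$ can also be computed from the zero structure of $f$. Pairing with $\sin(cx)$ is natural because $\cos(ca) = 0$ eliminates exactly the boundary contributions that would otherwise involve the uncontrolled even-order derivatives $f^{(2j)}(\pm a)$; the surviving endpoint terms are those involving $f^{(2j-1)}(\pm a)$, each bounded by $c^{2j-1}$ times an antiderivative of $\psi$ at the endpoint. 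The residual volume integral $\int f^{(2m)}(x)\,\Phi^{(-2m)}(x)\,dx$ is bounded by $c^{2m}\,\|\Phi^{(-2m)}\|_{L^1}$, which is small because every antiderivative of $\sin(cx)\,\psi(x)$ contributes an extra factor $1/c$; the condition $m \geq n \log n$ then makes this remainder geometrically negligible against the main terms.

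Combining the two evaluations of $\int f\Phi$ would yield an identity of the shape
\[
1 \;=\; f(0) \;=\; A(x_{-n}, \ldots, x_n) \;+\; O(c^{2m}),
\]
in which $A$ is a closed-form quantity built from the zero positions, essentially through the product $\prod_{k}(0-x_k)^{-1}$ or a Lagrange-type basis at those points. The closer the $x_k$ are packed towards the dense comb $\{0, \pm\pi, \pm 2\pi, \ldots\}$, i.e. the smaller $s$, the smaller $|A|$ becomes; forcing $|A| \geq 1 - O(c^{2m})$ then translates into a lower bound on $s$.

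The principal obstacle is extracting the precise asymptotic constants $\log 2$ and $(1-c)/|\log(1-c)|$. I expect the ratio $(1-c)/|\log(1-c)|$ to emerge when one balances the integration-by-parts remainder $c^{2m}$ (which effectively requires $m$ of order $n/|\log c|$) against the logarithm of the product $\prod_{k=1}^n ((k-1)\pi + s)$, whose Stirling asymptotics introduce a logarithmic factor in $n$ that has to be absorbed. The constant $\log 2$ should appear as the value of a Riemann-sum comparison between the constrained positions $(k-1)\pi + O(s)$ and the extremal band-limit spacing $(k-\tfrac{1}{2})\pi/c$, via an evaluation such as $\int_0^1 dt/(1+t) = \log 2$. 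The restrictions $c_\epsilon < c$ and $c < 1 - 1/(2n)$ respectively guarantee the asymptotic regime in which these estimates are sharp and the geometric fact that all $2n$ prescribed zeros can fit inside $(-a, a)$.
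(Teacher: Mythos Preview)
Your high-level strategy---represent $f(0)$ via an integral identity, integrate by parts $2m$ times, bound the odd-derivative boundary terms and the $f^{(2m)}$ remainder, then show the resulting upper bound is $<1$ unless $s$ is large enough---is indeed the shape of the paper's argument. But the execution you sketch has real gaps, and several of your specific guesses are wrong.

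\textbf{The kernel.} The paper does not test against $\sin(cx)\,\psi(x)$. It builds $\Psi_{2m-1}$ out of Bernoulli polynomials $B_{2l}$ evaluated at $\tfrac12+\tfrac{x\pm x_k}{4a}$, with weights $\mu_k$ that are Lagrange coefficients in the variable $\sin(\pi x_k/(2a))$. The vanishing of the even-order boundary terms is \emph{not} due to $\cos(ca)=0$; it comes from the antisymmetry $B_{2l-1}(\tfrac12+t)=-B_{2l-1}(\tfrac12-t)$, which forces $\Psi_{2m-1}^{(2m-2j+1)}(\pm a)=0$ for every $c$. With your proposed $\Phi=\sin(cx)\,\psi(x)$, successive antiderivatives are linear combinations of $\sin(cx)$ and $\cos(cx)$ with polynomial coefficients, and there is no reason the even-index boundary contributions should vanish; you would have to design $\psi$ very carefully, and you have not.

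\textbf{The monotonicity step.} Your proposal omits the step that carries the whole argument. After the identity, the paper proves (via a delicate sign lemma on derivatives of $B_{2m}(\tfrac12+\tfrac1\pi\arcsin\sqrt{x})$) that each boundary term $\Psi_{2k-1}^*(\cdot,\pm a)$ has a definite sign and is monotone in every $x_i$. This is what permits replacing the actual zeros by the extremal configuration $x_k^*=\mathrm{sgn}(k)((|k|-1)\pi+s^*)$ and reducing everything to one explicit estimate. You assert ``the smaller $s$, the smaller $|A|$'' but supply no mechanism for proving it.

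\textbf{The constants.} Your guess for the origin of $\log 2$ is incorrect. It does not come from $\int_0^1 dt/(1+t)$; it comes from $\int_0^1 \log\sin(\tfrac{\pi}{2}t)\,dt=-\log 2$, which appears when one bounds $\prod_j \sin(\pi x_j^*/(2a))$ by a Riemann sum. The factor $(1-c)/|\log(1-c)|$ is then simply the largest $\eta$ for which the resulting one-variable function of $\delta=1-c$ stays negative near $\delta=0$; it has nothing to do with balancing $c^{2m}$ against a Stirling factor, and the condition $m\ge n\log n$ is used only to control the Chebyshev-polynomial tail in the remainder integral.

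In short, the outline is the right shape, but the specific kernel, the boundary-vanishing mechanism, the monotonicity lemma, and the provenance of the constants are all either missing or misidentified. As it stands this is a heuristic, not a proof.
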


 This work stems from an observation of A.Ivi\`c [6] about the values of the derivatives of $Z$ in a neighborood of points where  $\vert Z \vert$ reaches a large value. In [1] we made a first step toward the proof of Theorem \ref{genth} by solving a simpler problem of the same nature.\\ 
The organization of this paper is as follows : In Section 2 we prove the key identity, a property of the derivatives of Bernoulli polynomials and preparatory lemmas. The proofs of Theorem~\ref{mainth} and \ref{genth} are given in Section 3.\\
The notations used in this paper are standard : we denote by $\lfloor x\rfloor$ the usual floor function and we set
$\{x\}=x-\lfloor x\rfloor$. As usual $B_n(x)$ and $T_n(x)$ stand for Bernoulli and Chebyshev polynomial of degree $n$.

\section{Preliminary results}
 We first prove an identity which will be used later to establish a relation between the value of a function $f\in C^{2m}[-a,a]$ at $0$, the zeros of $f$ and the values of its derivatives of odd order on the boundaries of the interval.
\begin{lemma}\label{1}
Let $-a<x_{-n}<\ldots<x_{-1}<x_0<x_1<\ldots<x_n<a$ and for $l=1,2,\ldots$ let
$\Psi_{2l-1}$ be the function defined on $[-a,a]$ by
\[
 \Psi_{2l-1}(x)=\frac{(4a)^{2l-1}}{(2l)!}\,\sum_{k=-n}^{n}\,\mu_k\,\Big{(}B_{2l}\big{(}\,\frac{1}{2}+\frac{x+x_k}{4a}\,\big{)}+B_{2l}\big{(}\,\Big{\{}\frac{x-x_k}{4a}\Big{\}}\,\big{)}\Big{)}
 \]
where $\displaystyle \sum_{k=-n}^{n}\mu_k =0$.
Then for $f\in C^{2m}[-a,a]$ where $m\geq 1$ we have the identity
\begin{equation} \label{eq:key}
\sum_{k=-n}^n\,\mu_k f(x_k)=\sum_{k=1}^{m}f^{(2k-1)}(a)\Psi_{2k-1}(a)-\sum_{k=1}^{m}f^{(2k-1)}(-a)\Psi_{2k-1}(-a)-\int\limits_{-a}^{a}f^{(2m)}(x)\Psi_{2m-1}(x)\, dx.
\end{equation}
\end{lemma}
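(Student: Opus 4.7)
The plan is to evaluate $I := \int_{-a}^{a} f^{(2m)}(x)\Psi_{2m-1}(x)\,dx$ by integrating by parts $2m$ times. Since $\Psi_{2m-1}$ is built from Bernoulli polynomials of degree $2m$, its derivatives cycle downward in degree, and at the $(2m)$-th derivative the non-smooth periodic Bernoulli polynomial $B_1(\{\cdot\})$ produces Dirac masses exactly at the $x_i$; those masses will produce the left-hand side of~(\ref{eq:key}), while the successive boundary terms produce the right-hand side.

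I would first prove by induction on $j$, using $B_n'=nB_{n-1}$ (and the same identity for the periodic extension of $B_n$), that for $0\le j\le 2m-1$
\[
\Psi_{2m-1}^{(j)}(x) = \frac{(4a)^{2m-1-j}}{(2m-j)!}\sum_{i=-n}^{n}\mu_i\left(B_{2m-j}\!\left(\tfrac{1}{2}+\tfrac{x+x_i}{4a}\right)+B_{2m-j}\!\left(\left\{\tfrac{x-x_i}{4a}\right\}\right)\right).
\]
Since $B_n(\{\cdot\})$ is continuous for $n\ge 2$, this places $\Psi_{2m-1}$ in $C^{2m-2}[-a,a]$; only $\Psi_{2m-1}^{(2m-1)}$ is discontinuous, with a jump of $-\mu_i$ at $x=x_i$ (the only integer value of $(x-x_i)/4a$ on $[-a,a]$ being $0$). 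Its classical derivative reduces to the constant $\tfrac{1}{2a}\sum_i\mu_i=0$, so in the sense of distributions $\Psi_{2m-1}^{(2m)}(x)=-\sum_i\mu_i\,\delta(x-x_i)$.

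A short computation using the reflection $B_n(1-y)=(-1)^n B_n(y)$ then shows that at $x=\pm a$ the two Bernoulli terms in the displayed formula combine into $\bigl(1+(-1)^{2m-j}\bigr)B_{2m-j}\bigl((a\mp x_i)/4a\bigr)$, where the sign in $\mp$ is synchronised with that in $\pm a$. This vanishes for odd $j$ and, for $j=2(m-k)$ with $1\le k\le m$, matches the analogous reduction of $\Psi_{2k-1}(\pm a)$; hence
\[
\Psi_{2m-1}^{(j)}(\pm a)=0\text{ for odd }j,\qquad \Psi_{2m-1}^{(2(m-k))}(\pm a)=\Psi_{2k-1}(\pm a).
\]

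Finally, I would integrate by parts $2m-1$ times directly (legitimate since $\Psi_{2m-1}\in C^{2m-2}$) and handle the last step by splitting $[-a,a]$ at the jump points $x_i$, which produces $-\sum_i\mu_i f(x_i)$ from the jumps and $0$ from the vanishing classical $\Psi_{2m-1}^{(2m)}$. The odd-$j$ boundary terms vanish and the even-$j$ ones reorganise, via $\Psi_{2m-1}^{(2(m-k))}(\pm a)=\Psi_{2k-1}(\pm a)$, into the sum stated in the lemma, collapsing everything into~(\ref{eq:key}). The main technical obstacle is that final integration by parts through the jumps: one must track the signs carefully so that the $-\mu_i$ jumps combine with the $(-1)^{2m-1}$ from the IBP recursion to yield the $\sum_i\mu_i f(x_i)$ term with the correct sign.
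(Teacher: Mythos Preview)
Your proposal is correct and follows essentially the same route as the paper: both compute $\Psi_{2m-1}^{(j)}$ via $B_n'=nB_{n-1}$, use the reflection $B_n(1-y)=(-1)^nB_n(y)$ to show that odd-order derivatives vanish at $\pm a$ while $\Psi_{2m-1}^{(2(m-k))}=\Psi_{2k-1}$, and then reduce everything to the identity $\int_{-a}^{a} f'\,\Psi_{2m-1}^{(2m-1)}\,dx=\sum_k\mu_k f(x_k)$, which the paper obtains by writing down the explicit piecewise-constant values of $\Psi_{2m-1}^{(2m-1)}$ and which you obtain equivalently from its jumps $-\mu_i$ at the $x_i$. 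The only cosmetic difference is direction: the paper starts from $\int f'\,\Psi_{2m-1}^{(2m-1)}$ and integrates by parts back up to $\int f^{(2m)}\Psi_{2m-1}$, whereas you start from the latter and work down.
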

\begin{proof}
By definition the function $\Psi_{2m-1}$ is $C^{2m-2}$, piecewise polynomial and the relation $B_l'(x)~=~lB_{l-1}(x)$ for $l=1,2,\ldots$ leads to
\[
 \Psi_{2m-1}^{(j)}(x)=\frac{(4a)^{2m-j-1}}{(2m-j)!}\,\sum_{k=-n}^{n}\,\mu_k\,\Big{(}B_{2m-j}\big{(}\,\frac{1}{2}+\frac{x+x_k}{4a}\,\big{)}+B_{2m-j}\big{(}\,\Big{\{}\frac{x-x_k}{4a}\Big{\}}\,\big{)}\Big{)}
 \]
for $j=1,\ldots,2m-1$ and $x\neq x_k$ if $j=2m-1$.
This implies that
\begin{equation}\label{eq:psipaire} 
\Psi_{2m-1}^{(2m-2j)}=\Psi_{2j-1}\hspace{0.2cm}\mbox{for}\hspace{0.2cm} j=1,2,\ldots,m
\end{equation}
and that
\begin{eqnarray}\label{eq:psiimpaire}
\Psi_{2m-1}^{(2m-2j+1)}(\pm a)&=&\frac{(4a)^{2j-2}}{(2j-1)!}\,\sum_{k=-n}^{n}\,\mu_k\,\Big{(}B_{2j-1}\big{(}\,\frac{1}{2}+\frac{\pm a+x_k}{4a}\,\big{)}+B_{2j-1}\big{(}\,\Big{\{}\frac{\pm a-x_k}{4a}\Big{\}}\,\big{)}\Big{)}\nonumber\\
&=&\frac{(4a)^{2j-2}}{(2j-1)!}\,\sum_{k=-n}^{n}\,\mu_k\,\Big{(}B_{2j-1}\big{(}\,\frac{1}{2}+\frac{\pm a+x_k}{4a}\,\big{)}+B_{2j-1}\big{(}\,\frac{1}{2}-\frac{\pm a +x_k}{4a}\,\big{)}\Big{)}\nonumber\\
&=&0\hspace{0.2cm}\mbox{for}\hspace{0.2cm} j=1,2,\ldots,m
\end{eqnarray}
since $B_{2l-1}(\frac{1}{2}+x)=-B_{2l-1}(\frac{1}{2}-x)$ for $l=1,2,\ldots$
Further for $x\neq x_k$ we have
\begin{eqnarray}\label{eq:derivative}
 \Psi_{2m-1}^{(2m-1)}(x) &=&\sum_{k=-n}^{n}\,\mu_k\,\Big{(}B_{1}\big{(}\,\frac{1}{2}+\frac{x+x_k}{4a}\,\big{)}+B_{1}\big{(}\,\Big{\{}\frac{x-x_k}{4a}\Big{\}}\,\big{)}\Big{)}\nonumber\\
 &=&
 \sum_{k=-n}^{n}\,\mu_k\,\Big{(}\,\frac{x+x_k}{4a}+\Big{\{}\frac{x-x_k}{4a}\Big{\}}-\frac{1}{2}\,\Big{)}
\end{eqnarray}
and as $\displaystyle \sum_{k=-n}^{n}\mu_k =0$ the function $\Psi_{2m-1}^{(2m-1)}$ is piecewise constant. Explicitly, for $x\in ]x_j,x_{j+1}[$ we get
\[
\Psi_{2m-1}^{(2m-1)}(x)=\sum_{k=-n}^j\,\mu_k\,(\,\frac{x}{2a}-\frac{1}{2}\,)+ \sum_{k=j+1}^n\,\mu_k\,(\,\frac{x}{2a}+1-\frac{1}{2}\,)=\sum_{k=j+1}^n \mu_k=-\sum_{k=-n}^j \mu_k
\]
which leads to 
\[
\int\limits_{x_{j}}^{x_{j+1}}f'(x)\Psi_{2m-1}^{(2m-1)}(x)\,dx=-\Big{(}\sum_{k=-n}^{j}\mu_k\Big{)}(f(x_{j+1})-f(x_j)) \hspace{0.2cm} \mbox{for}\hspace{0.2 cm} j=-n,\ldots,n-1.
\]
Summing these equalities and using that $\Psi_{2m-1}^{(2m-1)}=0$ on the intervals $[-a,x_{-n}[$ and $]x_n,a]$, which follows from (\ref{eq:derivative}),  we have
\[
\sum_{k=-n}^n\,\mu_k f(x_k)=\int\limits_{-a}^{a}f'(x)\Psi_{2m-1}^{(2m-1)}(x)\,dx
\]
and we complete the proof by integrating $2m-1$ times the right-hand side by parts taking into account relations (\ref{eq:psipaire}) and (\ref{eq:psiimpaire}).
\end{proof}
For further use we recall some elementary facts concerning the divided differences.
\begin{lemma}\label{2}
Let $f\in C^{2n}]\negmedspace -T,T[$ and let $g$ be the function defined for pairwise distinct numbers $t_{-n},\ldots,t_n\in ]\negmedspace -T,T[$ by
\[
g(t_{-n},\ldots,t_{n})=\sum_{k=-n}^{n}\frac{f(t_k)}{\displaystyle \prod_{\genfrac{}{}{0 cm}{} {-n\leq j\leq n}{\; j\neq k}}(t_k-t_j)}\,.
\]
\newpage
Then
\begin{enumerate}[\indent]
\item[a)] The function $g$ has a continuous extension $g^*$ defined for $t_{-n},\ldots, t_n\in ]\negmedspace -T,T[$.
\item[b)] There exist $\eta=\eta(t_{-n},\ldots,t_n)$ and $\xi=\xi(t_{-n},\ldots,t_n) \in ]\negmedspace-T,T[$ such that
\[ 
g^*(t_{-n},\ldots,t_n)=\frac{f^{(2n)}(\eta)}{(2n)!}\hspace{3mm}\mbox{and}\hspace{3mm}\frac{\partial}{\partial t_i}\Big{(}(t_i-t_j)g^*(t_{-n},\ldots,t_n)\Big{)}=\frac{f^{(2n)}(\xi)}{(2n)!}\hspace{3mm}\mbox{if}\hspace{3mm}i\neq j.
\]
Moreover if $f\in C^{2n+1}]-T,T[$ there exists $\tau=\tau(t_{-n},\ldots,t_n)\in ]\negmedspace-T,T[$ such that
\[
\frac{\partial}{\partial t_i}g^*(t_{-n},\ldots,t_n)=\frac{f^{(2n+1)}(\tau)}{(2n+1)!}\,.
\] 
\item[c)]
Let $y_0,y_1,\ldots,y_l$ be the distinct values of $t_{-n},\ldots,t_n$ considered as fixed and let $r_k$ be the number of index $j$ such that $t_j=y_k$. Then there exist $\alpha_{k,i}$ depending on $y_0,y_1,\ldots,y_l$ such that
\[
g^*(t_{-n},\ldots,t_{n})=\sum_{k=0}^l \sum_{i=0}^{r_k - 1}\alpha_{k,i}f^{(i)}(y_k).
\]
\end{enumerate}
\end{lemma}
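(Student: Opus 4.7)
The function $g$ is precisely the classical divided difference $f[t_{-n},\ldots,t_n]$, so the plan is to derive all three claims from the Hermite--Genocchi integral representation
\[
g(t_{-n},\ldots,t_n) \;=\; \int_{\Sigma}\, f^{(2n)}\Bigl(\sum_{k=-n}^{n}\sigma_k t_k\Bigr)\, d\sigma,
\]
where $\Sigma$ denotes the standard simplex $\{\sigma_{-n},\ldots,\sigma_n\geq 0,\ \sum_k \sigma_k=1\}$ equipped with the measure of total mass $1/(2n)!$. This identity is obtained for pairwise distinct $t_k$ by induction on $n$ via integration by parts (equivalently via the divided-difference recurrence), and since its right-hand side is manifestly $C^\infty$ in $(t_{-n},\ldots,t_n)$ on all of $]\negmedspace-T,T[^{\,2n+1}$, it supplies the continuous extension $g^*$ demanded by part (a).

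For part (b), the first identity $g^*=f^{(2n)}(\eta)/(2n)!$ comes from applying the mean-value theorem to the Hermite--Genocchi integral: the argument of $f^{(2n)}$ lies in $]\negmedspace-T,T[$ and $\int_\Sigma d\sigma = 1/(2n)!$. The third identity follows by differentiating under the integral sign,
\[
\frac{\partial g^*}{\partial t_i} \;=\; \int_\Sigma \sigma_i\, f^{(2n+1)}\Bigl(\sum_k \sigma_k t_k\Bigr)\, d\sigma,
\]
and then exploiting the symmetry $\int_\Sigma \sigma_i\, d\sigma = 1/(2n+1)!$ together with the mean-value theorem. For the middle identity I would invoke the standard divided-difference recurrence
\[
(t_i-t_j)\,g^*(t_{-n},\ldots,t_n) \;=\; f[t_{-n},\ldots,\widehat{t_j},\ldots,t_n]-f[t_{-n},\ldots,\widehat{t_i},\ldots,t_n];
\]
the second term is independent of $t_i$, so differentiation in $t_i$ reduces to a partial derivative of a $(2n-1)$-st divided difference in $2n$ variables, and the already-established third formula applied at that lower order gives $f^{(2n)}(\xi)/(2n)!$.

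Part (c) is handled most transparently via Hermite interpolation: the unique polynomial $P$ of degree $2n$ determined by $P^{(i)}(y_k)=f^{(i)}(y_k)$ for $0\leq i\leq r_k-1$ has leading coefficient equal to the confluent divided difference $g^*(t_{-n},\ldots,t_n)$, and that leading coefficient is by construction a linear combination $\sum_{k,i}\alpha_{k,i}f^{(i)}(y_k)$ whose coefficients depend only on the nodes $y_0,\ldots,y_l$. The continuity of $g^*$ from part (a) guarantees that this value matches $g^*$ evaluated at the prescribed argument with multiplicities. The main subtle point is checking that the divided-difference recurrence used in (b) extends consistently to the continuous completion when $t_i$ and $t_j$ are close or equal; this is secured by (a), and the remainder is formal manipulation of the integral representation.
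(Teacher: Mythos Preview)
Your proof is correct and follows essentially the same route as the paper: both rest on the Hermite--Genocchi integral representation (the paper writes it in the iterated form $\int_0^1 d\tau_1\int_0^{\tau_1}d\tau_2\cdots$), deduce (a) from its smoothness in the nodes, and obtain the formulas in (b) by the mean-value theorem; for the middle identity in (b) your use of the recurrence $(t_i-t_j)g^*=f[\,\ldots,\widehat{t_j},\ldots]-f[\,\ldots,\widehat{t_i},\ldots]$ is exactly what the paper's explicit integration of the innermost variable $\tau_{2n}$ amounts to. For (c) the paper simply cites a reference on confluent divided differences, so your Hermite-interpolation sketch is in fact more informative than the paper's own treatment.
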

\begin{proof}
{ ~  }
\begin{enumerate}[\indent]
\item[a)]
This is a consequence of the representation formula 
\begin{equation}\label{eq:rel}
g(t_{-n},\ldots,t_{n})=
\int\limits_0^1\,d\tau_1\,\int\limits_0^{\tau_1}\,d\tau_2\cdots\int\limits_0^{\tau_{2n-1}}\,
f^{(2n)}\,(t_{-n}+\sum_{k=1}^{2n}\,\tau_k(t_{-n+k}-t_{-n+k-1}))\, d\tau_{2n}\,.
\end{equation}
\item[b)]
The first and last assertions follow from relation (\ref{eq:rel}) together with the mean value theorem. 
Since divided differences are invariant by permutation it is sufficient to prove the second assertion for $i=n$ and $j=n-1$. Multiplying (\ref{eq:rel}) by $t_n-t_{n-1}$ and integrating with respect to $\tau_{2n}$ we have
\begin{eqnarray*}
&\displaystyle (t_n-t_{n-1})g^*(t_{-n},\ldots,t_{n}) =\\
&\displaystyle \int\limits_0^1\,d\tau_1\cdots\int\limits_0^{\tau_{2n-2}}
f^{(2n-1)}(t_{-n}+\sum_{k=1}^{2n-1}\tau_k(t_{-n+k}-t_{-n+k-1})+\tau_{2n-1}(t_{n}-t_{n-1}))d\tau_{2n-1}-\\
&\displaystyle \int\limits_0^1\,d\tau_1\cdots\int\limits_0^{\tau_{2n-2}}
f^{(2n-1)}(t_{-n}+\sum_{k=1}^{2n-1}\tau_k(t_{-n+k}-t_{-n+k-1}))d\tau_{2n-1}
\end{eqnarray*}
and therefore
\begin{eqnarray*}
&\displaystyle\frac{\partial}{\partial t_n}\Big{(}(t_n-t_{n-1})g^*(t_{-n},\ldots,t_{n})\Big{)} =\\
&\displaystyle\int\limits_0^1\,d\tau_1\cdots\int\limits_0^{\tau_{2n-2}}\,
f^{(2n)}\,(t_{-n}+\sum_{k=1}^{2n-1}\,\tau_k(t_{-n+k}-t_{-n+k-1})+\tau_{2n-1}(t_{n}-t_{n-1}))\tau_{2n-1}\, d\tau_{2n-1}\,.
\end{eqnarray*} 
The use of the mean value theorem completes the proof.
\item[c)]
The proof is given in [7].
\end{enumerate}
\end{proof}
In the next lemma we indicate the choice of coefficients $\mu_k$ for which the identity of Lemma~\ref{1} is of practical use for large values of $a$. The main reason of this choice will appear in the proof of c) of Lemma~\ref{5}.
\begin{lemma}\label{3}
Let $\omega$ and $\Omega$ be the sets defined by
\[
\omega =\{(x_{-n},\ldots,x_{-1},x_1,\ldots,x_n)\in \mathbb{R}^{2n}\mid -a<x_{-n}<\ldots<x_{-1}<0<x_1<\ldots<x_n<a \}
 \]
  and 
  \[\Omega=\{(x_{-n},\ldots,x_{-1},x_1,\ldots,x_n)\in \mathbb{R}^{2n}\mid -a<x_{-n}\leq\ldots\leq x_{-1}< 0 < x_1\leq\ldots\leq x_n<a \}
  \]
   and let further $\Psi_{2l-1}$ be the function defined on $\omega \times[-a,a]$ by 
\[
\Psi_{2l-1}(x_{-n},\ldots,x_{-1},x_1,\ldots,x_n,x)=\frac{(4a)^{2l-1}}{(2l)!}\,\sum_{k=-n}^{n}\,\mu_k\,\Big{(}B_{2l}\big{(}\frac{1}{2}+\frac{x+x_k}{4a}\big{)}+B_{2l}\big{(}\Big{\{}\frac{x-x_k}{4a}\Big{\}}\big{)}\Big{)}
\]
where $x_0=0$, $\displaystyle \mu_k=\frac{\alpha_k}{\alpha_0}$ and 
\[
\alpha_k=\frac{\rule[-2 mm]{0 cm}{4 mm}1}{\rule[3 mm]{0 cm}{4 mm}\displaystyle \prod_{\genfrac{}{}{0 cm}{} {-n\leq j\leq n}{\; j\neq k}}\Big{(}\sin(\pi \frac{x_k}{2a})-\sin(\pi \frac{x_j}{2a})\Big{)}}\hspace{0.5cm} \mbox{for }k=-n,\ldots,n.\\
\]
Then
\begin{enumerate}[\indent]
\item[a)] For $l\geq 1$ the functions $\Psi_{2l-1}(\cdot,\ldots,\cdot,\pm a)$ have continuous extensions $\Psi_{2l-1}^*(\cdot,\ldots,\cdot,\pm a)$ to $\Omega$.  
\item[b)] For $l\geq n+1$ the function $\Psi_{2l-1}$ has a continuous extension $\Psi_{2l-1}^*$ to $\Omega\times [-a,a]$.
\item[c)] Let $m\geq n+1$ and $f\in C^{2m}[-a,a]$ a function defined on $[-a,a]$ which vanishes at $x_k$ where $a<x_{-n}\leq\ldots\leq x_{-1}< 0< x_1\leq\ldots\leq x_n<a$ and the $x_k$ are numbered taking into account their multiplicity. Then we have the identity 
\begin{equation}\label{eq:main}
f(0)=\sum_{k=1}^{m}f^{(2k-1)}(a)\Psi_{2k-1}^*(a)-\sum_{k=1}^{m}f^{(2k-1)}(-a)\Psi_{2k-1}^*(-a)-
\int\limits_{-a}^{a}f^{(2m)}(x)\Psi_{2m-1}^*(x)\, dx
\end{equation}
where for short $\Psi_{2k-1}^*(\pm a)$ and $\Psi_{2m-1}^*(x)$ stand for $\Psi_{2k-1}^*(\cdot,\ldots,\cdot,\pm a)$ and $\Psi_{2m-1}^*(\cdot,\ldots,\cdot,x)$\,.
\end{enumerate}
\end{lemma}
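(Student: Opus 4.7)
\emph{Unifying idea.} The coefficients $\mu_k=\alpha_k/\alpha_0$ are built so that $\sum_{k=-n}^{n}\alpha_k\,g(x_k)$ is a divided difference in disguise: setting $y_k=\sin(\pi x_k/(2a))$, one has $\alpha_k=\prod_{j\neq k}(y_k-y_j)^{-1}$, so
\[
\sum_{k=-n}^{n}\alpha_k\,g(x_k)=[y_{-n},\ldots,y_n]\,G,\qquad G(y):=g\bigl(\tfrac{2a}{\pi}\arcsin y\bigr).
\]
The map $x\mapsto\sin(\pi x/(2a))$ is a $C^\infty$ diffeomorphism of $(-a,a)$ onto $(-1,1)$, so it preserves $C^r$ regularity and zero multiplicities. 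The factor $\alpha_0^{-1}=\prod_{j\ne 0}(-y_j)$ is continuous and nonvanishing on $\Omega$ because $x_{-1}<0<x_1$ forces every $y_j$ with $j\ne 0$ to stay away from $0$. Lemma~\ref{2}(a) then produces a continuous extension of the divided difference to coinciding arguments whenever $G\in C^{2n}$, and Lemma~\ref{2}(c) gives the corresponding Newton-form expansion.

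\emph{Parts a) and b).} At $x=\pm a$ the summand is polynomial in $x_k$, because for $x_k\in(-a,a)$ the arguments $(\pm a-x_k)/(4a)$ stay in a single open period, so the fractional parts collapse to affine functions of $x_k$; hence the associated $G^{\pm}$ is $C^\infty$ and part a) follows. In the interior, the summand contains $B_{2l}(\{(x-x_k)/(4a)\})$; the $2l$-th periodic Bernoulli function is exactly $C^{2l-2}$ (the highest derivative that still matches across integers is the $(2l-2)$nd, because $B_{2l-1}(1)\ne B_{2l-1}(0)$), so the hypothesis $l\ge n+1$ is precisely $2l-2\ge 2n$, giving the regularity needed to invoke Lemma~\ref{2}(a) uniformly in $x\in[-a,a]$. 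Dividing by the smooth nonvanishing $\alpha_0$ yields the extensions $\Psi_{2l-1}^{*}(\pm a)$ and $\Psi_{2l-1}^{*}$.

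\emph{Part c).} On $\omega$ we have $\sum_k\mu_k=\alpha_0^{-1}[y_{-n},\ldots,y_n]\,1=0$, so Lemma~\ref{1} applies and gives $\sum_k\mu_k\,f(x_k)=\mathrm{RHS}(x,f)$ for every $f\in C^{2m}[-a,a]$, where $\mathrm{RHS}$ denotes the right-hand side of~(\ref{eq:main}). Given $x^{0}\in\Omega$ with prescribed multiplicities and $f$ vanishing accordingly, I would argue by continuous perturbation $x^{\epsilon}\in\omega$ with $x^{\epsilon}\to x^{0}$: the right-hand side converges by parts a) and b), while by Lemma~\ref{2}(a) the left-hand side converges to $\alpha_0(x^{0})^{-1}[y_{-n}^{0},\ldots,y_n^{0}]F$ with $F(y)=f(\tfrac{2a}{\pi}\arcsin y)$. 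Expanding this limit in Newton form via Lemma~\ref{2}(c), every term with $y_j^{0}\ne 0$ vanishes because $F$ inherits from $f$ a zero of multiplicity $r_j$ at that node, and only the isolated node $y_0^{0}=0$ survives, producing $\alpha_0(x^{0})^{-1}\gamma_{0,0}\,f(0)$. The main obstacle is the identification $\gamma_{0,0}=\alpha_0(x^{0})$; since Lemma~\ref{2}(c) guarantees that $\gamma_{0,0}$ depends only on the $y_j^{0}$'s, I would test the formula against $\tilde f(x)=\prod_{j\ne 0}\bigl(\sin(\pi x/(2a))-\sin(\pi x_j^{0}/(2a))\bigr)$, for which $\tilde F$ is a monic polynomial of degree $2n$ with extended divided difference equal to $1$ and value $\tilde F(0)=\alpha_0(x^{0})^{-1}$; this forces $\gamma_{0,0}=\alpha_0(x^{0})$ and yields the asserted identity $f(0)=\mathrm{RHS}^{*}(x^{0},f)$.
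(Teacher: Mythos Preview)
Your argument is correct and follows the same route as the paper: recast $\sum_k\alpha_k\,g(x_k)$ as a divided difference in the variables $y_k=\sin(\pi x_k/(2a))$, invoke Lemma~\ref{2}(a) for the continuous extension (with the regularity count $2l-2\ge 2n$ giving part~b)), and in part~c) pass to the limit from $\omega$ to $\Omega$ and use Lemma~\ref{2}(c); your explicit identification of $\gamma_{0,0}=\alpha_0(x^{0})$ via the test function $\tilde f$ spells out cleanly what the paper leaves implicit. One small slip in the parenthetical justification: the $(2l-1)$-th derivative of $B_{2l}(\{\cdot\})$ is proportional to $B_1(\{\cdot\})$, and it is $B_1(0)\ne B_1(1)$ that produces the jump, not $B_{2l-1}(0)\ne B_{2l-1}(1)$ (for $l\ge 2$ one has $B_{2l-1}(0)=B_{2l-1}(1)=0$); your conclusion that the periodized $B_{2l}$ is exactly $C^{2l-2}$ is nonetheless correct.
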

\begin{proof}
{~}
\begin{enumerate}[\indent]
\item[a)]Introducing the function $h$ defined by
\[
h(t,x)=\frac{(4a)^{2l-1}}{(2l)!}\Big{(}B_{2l}\big{(}\frac{1}{2}+\frac{x}{4a}+\frac{1}{2\pi}\mbox{Arcsin}\,t\big{)}+B_{2l}\big{(}\Big{\{} \frac{x}{4a}-\frac{1}{2\pi}\mbox{Arcsin}\,t\Big{\}}\big{)}\Big{)}
\]
we have
\[ 
\displaystyle\Psi_{2l-1}(x_{-n},\ldots,x_n,\pm a)=\frac{1}{\alpha_0}\,\sum_{k=-n}^n \alpha_k\, h(\sin(\pi\frac{x_k}{2a}),\pm a)
\]
for $(x_{-n},\ldots,x_n)\in \omega$ and the conclusion holds since the functions $h(\cdot,\pm a)$ belong to $C^{\infty}]\negmedspace -1,1[$\,.
\item[b)]
By definition the function $h$ belongs to $C^{2l-2}\big{(} ]\negmedspace -1,1[\times[-a,a]\big{)}$ and the assertion is a consequence of the representation formula (\ref{eq:rel}) since $2l-2\geq 2n$. 
\item[c)]
For $(x_{-n},\ldots,x_n)\in \omega$ the left-hand side of identity (\ref{eq:key}) writes 
 \[
\frac{1}{\alpha_0}\sum_{k=-n}^n \alpha_k f(\frac{2a}{\pi} \mbox{Arcsin}(\,\sin(\pi \frac{x_k}{2a})))
 \]
and thanks to Lemma \ref{2} this expression and hence the identity (\ref{eq:key}) extend to $(x_{-n},\ldots,x_n)\in \Omega$. One completes the proof by observing, thanks 
to Lemma \ref{2}, that the left-hand side reduces to $f(0)$ when the $x_k$ are zeros of multiplicity $r_k$ of $f$.
\end{enumerate}
\end{proof}
\newpage
The results stated in Lemma \ref{4} play a central role in the proof of main properties of functions $\Psi_{2l-1}^*(\cdot,\ldots,\cdot,\pm a)$.
\begin{lemma}\label{4}
For all $m,k \in \mathbb{N^*}$ we have the inequality
\[
(-1)^{m+1}\frac{d^k}{dx^k}B_{2m}(\,\frac{1}{2}+\frac{1}{\pi}\mbox{\,Arcsin}\sqrt{x}\;)>0
\hspace{0.5 cm}\mbox{for }x\in [0,1[\,.
\]
\end{lemma}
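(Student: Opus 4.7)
The plan is to prove the stronger statement that $G(x) := (-1)^{m+1}B_{2m}\bigl(\tfrac12+\tfrac{1}{\pi}\mathrm{Arcsin}\sqrt{x}\bigr)$ admits a Taylor expansion at $x=0$ of the form $G(x)=G(0)+\sum_{n\ge 1}a_n x^n$ with $a_n>0$ for every $n\ge 1$. This immediately implies the lemma, because termwise differentiation then yields $G^{(k)}(x)=\sum_{n\ge k}\tfrac{n!}{(n-k)!}a_n x^{n-k}>0$ for every $k\ge 1$ on $[0,1)$.

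First I would verify that $G$ is genuinely analytic at $x=0$ despite the square-root singularity of $s(x):=\tfrac{1}{\pi}\mathrm{Arcsin}\sqrt{x}$. This holds because $B_{2m}(u)=B_{2m}(1-u)$, which forces $B_{2m}(\tfrac12+s)$ to be an even polynomial in $s$; hence $G(x)$ factors through $u(x):=s(x)^2=\tfrac{1}{\pi^2}(\mathrm{Arcsin}\sqrt{x})^2$, which admits the classical analytic expansion $u(x)=\sum_{n\ge 1}\tfrac{2^{2n-1}}{\pi^2 n^2\binom{2n}{n}}x^n$ with strictly positive coefficients (and so does every power $u(x)^\ell$). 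To obtain a tractable formula for the coefficients $a_N$, I would use the Fourier series $(-1)^{m+1}B_{2m}(t)=\tfrac{2(2m)!}{(2\pi)^{2m}}\sum_n\cos(2\pi nt)/n^{2m}$ together with $\cos(2n\,\mathrm{Arcsin}\sqrt{x})=T_n(1-2x)=(-1)^nT_n(2x-1)$, producing
\[ G(x)=\frac{2(2m)!}{(2\pi)^{2m}}\sum_{n\ge 1}\frac{T_n(2x-1)}{n^{2m}}, \]
and then substitute the elementary identity $T_n(2x-1)=\sum_{j=0}^n(-1)^{n-j}\binom{2n}{2j}x^j(1-x)^{n-j}$ (from $T_n(\cos\theta)=\cos(n\theta)$ with $\sin(\theta/2)=\sqrt{1-x}$ and $\cos(\theta/2)=\sqrt{x}$). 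Expanding $(1-x)^{n-j}$ and collecting powers of $x$ then yields an explicit, doubly-alternating formula for each $a_N$.

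The main obstacle is the sign analysis of $a_N$: the resulting sum is alternating both in $n$ (via $(-1)^n$ from the Fourier series) and in $j$ (from the Chebyshev expansion). For $k=1$ the sign can be obtained directly: one computes $G'(x)=\tfrac{m\,(-1)^{m+1}B_{2m-1}(\tfrac12+s(x))}{\pi\sqrt{x(1-x)}}$, and then uses the classical sign property $(-1)^{m+1}B_{2m-1}(\tfrac12+t)>0$ for $t\in(0,\tfrac12)$ (a consequence of the Fourier sine series of $B_{2m-1}$) to conclude $G'(x)>0$ on $(0,1)$; the boundary value $a_1=\lim_{x\to 0^+}G'(x)=m(2m-1)(-1)^{m+1}B_{2m-2}(\tfrac12)/\pi^2$ is positive because $(-1)^{m+1}B_{2m-2}(\tfrac12)>0$ (using $B_{2l}(\tfrac12)=(2^{1-2l}-1)B_{2l}$ and the sign of the Bernoulli numbers). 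For general $N$ the plan is to isolate the dominant contribution — the small-$n$ terms where the weight $n^{-2m}$ is largest — and show that the oscillating tail cannot overturn its sign, exploiting $(-1)^{m+1}B_{2m}>0$. This delicate cancellation analysis is the crux of the argument and is where the bulk of the work resides.
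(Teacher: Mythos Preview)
Your overall strategy---reducing the lemma to the positivity of all Taylor coefficients $a_N$ ($N\ge 1$) of $G(x)=(-1)^{m+1}B_{2m}(\tfrac12+\tfrac{1}{\pi}\mathrm{Arcsin}\sqrt{x})$---is exactly what the paper does, and your treatment of the case $k=1$ is fine. The gap is in the general case. Your Fourier--Chebyshev route does not merely leave a ``delicate cancellation analysis'' unfinished; it breaks down structurally. From your expression $G(x)=\frac{2(2m)!}{(2\pi)^{2m}}\sum_{n\ge 1}T_n(2x-1)/n^{2m}$ and your identity for $T_n(2x-1)$ one computes
\[
c_{n,N}:=[x^N]\,T_n(2x-1)=(-1)^{n+N}\sum_{j=0}^{N}\binom{2n}{2j}\binom{n-j}{N-j},
\]
whose leading term in $n$ is $\binom{2n}{2N}\sim (2n)^{2N}/(2N)!$. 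Hence $\sum_{n\ge 1} c_{n,N}/n^{2m}$ diverges whenever $N\ge m$; already for $m=1$, $N=1$ one gets $c_{n,1}/n^2=2(-1)^{n+1}$. So the interchange of summation that produces your ``explicit formula for $a_N$'' is invalid for $N\ge m$, and there is no dominant term to isolate. The alternative you mention in passing---factoring through $u(x)=\pi^{-2}(\mathrm{Arcsin}\sqrt{x})^2$---does give convergent expressions, but it is not automatic either: the coefficients of $(-1)^{m+1}B_{2m}(\tfrac12+s)$ in $s^{2k}$ are $(-1)^{m+1}\binom{2m}{2k}B_{2m-2k}(\tfrac12)$, which alternate in sign, so positivity of $a_N$ still requires real work.

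The paper carries out precisely this second route. It expands $(\mathrm{Arcsin}\,x)^{2k}$ explicitly (Sublemma~2.5) with integer coefficients $b_{k,l}$ satisfying $b_{k+1,l+1}=b_{k,l}+l^2 b_{k+1,l}$, writes $a_l$ (up to a positive factor) as $f_{m,l}=(-1)^{m+1}\sum_{k}\frac{(2\pi)^{2m-2k}}{(2m-2k)!}B_{2m-2k}(\tfrac12)\,b_{k,l}$, and proves $f_{m,l}>0$ by induction on $m$. The key step is the recurrence $g_{m+1,l+1}+\tfrac{1}{l^2}g_{m,l}=g_{m+1,l}$ for $g_{m,l}:=f_{m,l}/((l-1)!)^2$, which shows $g_{m+1,l}$ is decreasing in $l$; positivity then follows once one proves $\lim_{l\to\infty}g_{m+1,l}=0$, which the paper obtains from $\lim_{l}b_{k,l}/((l-1)!)^2=\pi^{2k-2}/(2k-1)!$ (a multiple zeta value identity, Sublemma~2.6) together with the classical formula $\sum_j\binom{2m+1}{j}B_j(\tfrac12)2^{-(2m+1-j)}=B_{2m+1}(1)=0$. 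What your proposal is missing is precisely a mechanism of this kind that turns the alternating sum into a monotone one.
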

The proof of Lemma \ref{4} requires two technical results given in Sublemmas \ref{41} and \ref{42}.
\begin{sublemma}\label{41}
For all $k\in \mathbb{N}$ we have the Taylor expansion
\[
(\mbox{Arcsin}\, x)^{2k}=\sum_{l=0}^{\infty }\frac{(2k)!}{(2l)!}2^{2l-2k}\,b_{k\,,\,l}\,x^{2l} \hspace{0.5 cm}\mbox{for }x\in [-1,1]
\]
where $b_{k\,,\,l}$ are integers defined recursively by
\[
\left\lbrace
\begin{array}{l}
b_{\,0\,,\,0}=1 \;\mbox{and}\;b_{k\,,\,0}\,=\,b_{\,0\,,\,l}=0\;\hspace{2.2 mm} \mbox{for}\;k,l\geq 1\, \\
b_{k+1\,,\,l+1}\,=\,b_{k\,,\,l}\,+\,l^2 b_{k+1\,,\,l}\;\hspace{6 mm}\mbox{for }k,l\geq 0\,. 
\end{array}
\right.
\]
\end{sublemma}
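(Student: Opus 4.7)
The plan is to reduce the sublemma to a second-order linear ODE satisfied by $f_k(x):=(\arcsin x)^{2k}$, then match coefficients of a power-series Ansatz. Starting from the identity $\sqrt{1-x^2}\,f_k'(x)=2k(\arcsin x)^{2k-1}$, I differentiate once more and multiply by $\sqrt{1-x^2}$ to obtain
\[
(1-x^2)f_k''(x)-x f_k'(x) = 2k(2k-1)f_{k-1}(x).
\]
Since $f_k$ is even and real-analytic on $(-1,1)$, I write $f_k(x)=\sum_{l\geq 0}a_{k,l}x^{2l}$ with $a_{k,0}=0$ for $k\geq 1$ (because $f_k(0)=0$) and $a_{0,0}=1$. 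Carrying out the induction on $k$: the base case $k=0$ is immediate ($f_0\equiv 1$, so $a_{0,l}=\delta_{l,0}$, matching $b_{0,0}=1$, $b_{0,l}=0$).

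For the induction step, plugging the Ansatz into the ODE and shifting the index in the $(1-x^2)f_k''$ term yields, after collecting the coefficient of $x^{2l}$,
\[
2(l+1)(2l+1)\,a_{k,l+1}-(2l)^2 a_{k,l} = 2k(2k-1)\,a_{k-1,l}\qquad(l\geq 0).
\]
Then I substitute the claimed form $a_{k,l}=\frac{(2k)!}{(2l)!}\,2^{2l-2k}\,b_{k,l}$. Using $(2l+2)(2l+1)(2l)!=(2l+2)!$ and $2k(2k-1)(2k-2)!=(2k)!$, all factorials and powers of two cancel cleanly, producing the identity
\[
b_{k,l+1}=l^2 b_{k,l}+b_{k-1,l},
\]
which, after the trivial reindexing $k\mapsto k+1$, is exactly the stated recursion. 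The boundary values $b_{k,0}=0$ for $k\geq 1$ correspond to $f_k(0)=0$, and $b_{0,l}=0$ for $l\geq 1$ reflects $f_0\equiv 1$. Integrality of $b_{k,l}$ then follows inductively from the recursion and integer initial data.

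Finally, for convergence on the closed interval $[-1,1]$, I rely on the classical absolute convergence at $x=\pm 1$ of the Taylor series of $\arcsin x$ (equivalently of $(\arcsin x)^2$), so the expansion of $(\arcsin x)^{2k}$ obtained as a Cauchy product also converges absolutely at $x=\pm 1$; uniqueness of the Taylor coefficients on $(-1,1)$ forces them to agree with the $a_{k,l}$ produced by the recursion. I expect the main bookkeeping hazard—not really an obstacle—to be the matching of the two index shifts (the off-by-one between the ODE-derived recursion in $(k,l)$ and the sublemma's formulation in $(k+1,l+1)$), together with the verification that the initial conditions of the recursion are the ones produced by the ODE at $x=0$.
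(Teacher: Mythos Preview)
Your argument is correct and follows essentially the same route as the paper: derive the ODE $(1-x^2)f_k''-xf_k'=2k(2k-1)f_{k-1}$, insert an even power series, obtain the recurrence $(2l+2)(2l+1)c_{k,l+1}-4l^2c_{k,l}=2k(2k-1)c_{k-1,l}$ (the paper writes it with $k+1$ in place of $k$), and then verify by substitution that $c_{k,l}=\frac{(2k)!}{(2l)!}2^{2l-2k}b_{k,l}$ satisfies it. Your added remark on absolute convergence at $x=\pm 1$ is a small supplement the paper leaves implicit.
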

\begin{proof}
We note first that the functions $f_{2k}(x)\stackrel{def}{=}(\mbox{Arcsin}\, x)^{2k}$ satisfy
\[
(1-x^2)f''_{2k+2}(x)-x\,f'_{2k+2}(x)-(2k+2)(2k+1)f_{2k}(x)=0\hspace{0.5 cm}\mbox{for }x\in\; ]\!-1,1[.
\]
From the definition of $f_{2k}$ and the above equality it follows that numbers $c_{k\,,\,l}$ defined by
\[f_{2k}(x)= \sum_{l=0}^{\infty}\,c_{k\,,\,l}\,x^{2l} \hspace{0.5 cm}\mbox{for}\hspace{0.5cm} x\in [-1,1]
\]
are uniquely determined by the recurrence relations  
\[
\left\lbrace
\begin{array}{l}
 c_{\,0\,,\,0}\,=\,1 \;\mbox{and}\; c_{k\,,\,0}\,=\,c_{\,0\,,\,l}\,=\,0\hspace{3mm}\mbox{for }k,\,l\geq 1\,\\
(2l+2)(2l+1)c_{k+1\,,\,l+1}-4l^2c_{k+1\,,\,l}-(2k+2)(2k+1)c_{k\,,\,l}=0 \hspace{0.2 cm}\mbox{for }k,\,l\geq 0\,.
\end{array}
\right.
\]
A simple check shows that $c_{k \,,\,l}=\displaystyle \frac{(2k)!}{(2l)!}2^{2l-2k}b_{k\,,\,l}$.
\end{proof}
\begin{sublemma}\label{42}
Let $b_{k \,,\,l}$ be the numbers defined in Sublemma \ref{41}. Then
\begin{equation}\label{eq:limit}
 \lim_{l\to\infty}\frac{b_{k\,,\,l}}{((l-1)!)^2}=\frac{\pi^{2k-2}}{(2k-1)!} \hspace{3 mm}\mbox{for all }k \geq 1.
\end{equation}
\end{sublemma}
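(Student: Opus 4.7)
The natural move is to renormalize by setting $a_{k,l} := b_{k,l}/((l-1)!)^2$ for $l\geq 1$, so that the claim becomes $\lim_{l\to\infty} a_{k,l} = \pi^{2k-2}/(2k-1)!$. Dividing the recurrence $b_{k+1,l+1} = b_{k,l}+l^2 b_{k+1,l}$ by $(l!)^2=l^2((l-1)!)^2$ turns it into
\[
a_{k+1,l+1} = a_{k+1,l} + \frac{a_{k,l}}{l^2},
\]
while the initial condition $b_{k,0}=0$ for $k\geq 1$ forces $a_{k+1,1}=0$, so telescoping gives
\[
a_{k+1,l+1}=\sum_{j=1}^{l}\frac{a_{k,j}}{j^2}=\sum_{j=1}^{l}\frac{b_{k,j}}{(j!)^2}\qquad (k\geq 1).
\]

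Next, I would establish the closed-form identity
\[
\sum_{l\geq 1}\frac{b_{k,l}}{(l!)^2}=\frac{\pi^{2k}}{(2k+1)!}\qquad (k\geq 1).
\]
Starting from Sublemma \ref{41}, the substitution $x=\sin y$ produces
\[
y^{2k}=\sum_{l\geq 1}\frac{(2k)!\,2^{2l-2k}}{(2l)!}\,b_{k,l}\,\sin^{2l}y,
\]
and I would integrate over $[0,\pi/2]$. Since every $b_{k,l}$ is non-negative (immediate by induction from the recurrence), monotone convergence justifies term-by-term integration, and Wallis's formula $\int_0^{\pi/2}\sin^{2l}y\,dy=\frac{\pi}{2}(2l)!/[4^l(l!)^2]$ collapses the right-hand side to $\frac{(2k)!\,\pi}{2\cdot 4^k}\sum_{l\geq 1}b_{k,l}/(l!)^2$, while the left-hand side evaluates to $(\pi/2)^{2k+1}/(2k+1)$; equating the two yields the identity.

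The sublemma then follows by a short induction on $k$. The base $k=1$ is immediate: the recurrence gives $b_{1,l}=((l-1)!)^2$, hence $a_{1,l}=1$ and $L_1=1=\pi^0/1!$. For the inductive step, assuming $a_{k,l}$ is non-negative and bounded (which holds for $k=1$ and propagates), the telescoped formula displays $l\mapsto a_{k+1,l}$ as a non-decreasing sequence of partial sums whose limit is exactly $\sum_{j\geq 1}b_{k,j}/(j!)^2=\pi^{2k}/(2k+1)!=\pi^{2(k+1)-2}/(2(k+1)-1)!$, as required.

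The main obstacle is pinning down and proving the closed-form value of $\sum_{l\geq 1}b_{k,l}/(l!)^2$; once in hand, the rest is routine bookkeeping. Evaluating the series at $x=1$ is a tempting shortcut but produces the weights $4^l/(2l)!$ rather than the $1/(l!)^2$ needed to cancel the $((l-1)!)^2$ normalization in $a_{k,l}$, so the precise step that makes the factorials line up is integrating against the Wallis measure $\sin^{2l}y\,dy$ instead.
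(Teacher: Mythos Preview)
Your argument is correct and takes a genuinely different route from the paper. Both proofs begin with the same renormalization (the paper writes $d_{j,l}$ for your $a_{j,l}$) and the same one-step telescoping $a_{k+1,l+1}=\sum_{j=1}^{l}a_{k,j}/j^2$. At this point the paper iterates the recurrence $k-1$ times to obtain the nested sum
\[
\lim_{l\to\infty}a_{k,l}=\sum_{n_{k-1}>\cdots>n_1\geq 1}\prod_{j=1}^{k-1}\frac{1}{n_j^{2}}=\zeta(\{2\}_{k-1}),
\]
and then cites an external reference for the evaluation $\zeta(\{2\}_{k-1})=\pi^{2k-2}/(2k-1)!$. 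You avoid this citation: a single telescoping step reduces the problem to computing $\sum_{l\geq 1}b_{k,l}/(l!)^2$, and you evaluate that sum directly by integrating the Arcsine expansion of Sublemma~\ref{41} against the Wallis kernel, which produces exactly the factorial weights $1/(l!)^2$ you need. This is more self-contained and arguably more elementary; the paper's route, on the other hand, makes the link to multiple zeta values explicit, which is of independent interest. Both arguments ultimately rest on the same power-series identity---yours integrates it, the paper effectively differentiates it into the recursion and then appeals to a known MZV evaluation.
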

\begin{proof}
From the definition of numbers $b_{k\,,\,l}$ we infer that $b_{1\,,\,l}=((l-1)!)^2$ for $l\geq 1$. Thus relation (\ref{eq:limit}) is trivially true for $k=1$. We then assume $k\geq 2$. As $b_{j\,,\,1}=0$ for $j\geq 2$ the numbers $d_{j\,,\,l}$ defined for $j,\,l\geq 1$ by 
$\displaystyle d_{j\,,\,l}=\frac{b_{j\,,\,l}}{((l-1)!)^2}$ satisfy the recurrence relations
\[
\left\lbrace
\begin{array}{l}
 d_{j\,,\,1}\,=\,0 \;\mbox{and }d_{1\,,\,l}\,=1\hspace{3mm}\mbox{for } j\geq 2\;\mbox{and}\;l\geq 1, \\
d_{j+1\,,\,l+1}=\displaystyle\frac{1}{l^2}d_{j\,,\,l}+d_{j+1\,,\,l}\hspace{1 cm}
\mbox{for }j,\,l\geq 1\,.\\
\end{array}
\right.
\]
Using the fact that $d_{j-1\,,\,l}\,=\,0$ for $l=1,\cdots,j-2$ we get first for $j\geq 2$ the equality
\[
d_{j\,,\,n_j}=\sum_{n_{j-1}=j-1}^{n_j-1}\frac{1}{n_{j-1}^2}d_{j-1\,,\,n_{j-1}}
\]
which we iterate to obtain
\[
d_{k\,,\,l}=\sum_{n_{k-1}=k-1}^{l-1}\frac{1}{n_{k-1}^2}  \sum_{n_{k-2}=k-2}^{n_{k-1}-1}\frac{1}{n_{k-2}^2} \cdots 
\sum_{n_2=2}^{n_3-1}\frac{1}{n_{2}^2}\sum_{n_1=1}^{n_2-1}\frac{1}{n_{1}^2}\,.
\]
This leads to
\[
\lim_{l\to\infty}d_{k\,,\,l}=\sum_{n_{k-1}>n_{k-2}>\cdots>n_2>n_1>0}\prod_{j=1}^{k-1}\frac{1}{n_j^2}
\]
and we recognize in the right-hand side the number $\zeta (\{2\}_{(k-1)})$ whose value, given in [2], is equal to the right-hand side of (\ref{eq:limit})\;.
\end{proof}
\begin{proof}[ Proof of Lemma~{\rm\ref{4}}]
It suffices to prove that the numbers $e_{m\,,\,l}$ defined by 
\begin{equation}\label{eq:defin}
(-1)^{m+1}B_{2m}(\,\frac{1}{2}+\frac{1}{\pi}\mbox{Arcsin}\,x)=\sum_{l=0}^{\infty}e_{m\,,\,l}\,x^{2l}
\end{equation}
satisfy $e_{m,\,\,l}>0$ for all $m,l\in \mathbb{N^*}$.
Using Taylor's formula and the evenness of function $B_{2m}(\frac{1}{2}~+~\frac{t}{\pi})$ we have
\[
B_{2m}(\,\frac{1}{2}+\frac{t}{\pi}) = \sum_{k=0}^{m}\frac{1}{(2k)!}\,B_{2m}^{(2k)}(\frac{1}{2})\,(\frac{t}{\pi})^{2k}=
\sum_{k=0}^{m}\binom{2m}{2k}\,B_{2m-2k}(\frac{1}{2})\,(\frac{t}{\pi})^{2k}
\]
and the Taylor expansion of $\displaystyle (\mbox{Arcsin}\, x)^{2k}$ given in Sublemma \ref{41} leads to 
\begin{eqnarray*}
B_{2m}(\,\frac{1}{2}+\frac{1}{\pi}\mbox{Arcsin}\, x)&=&
\sum_{k=0}^{m}\left( \binom{2m}{2k} \,B_{2m-2k}(\frac{1}{2})\,\pi^{-2k}\sum_{l=0}^{\infty}
\frac{(2k)!}{(2l)!} 2^{2l-2k} b_{k\,,\,l}x^{2l}\right)\\&=&
\frac{(2m)!}{(2\pi )^{2m}}\sum_{k=0}^{m}\left(\frac{(2\pi )^{2m-2k}}{(2m-2k)!}B_{2m-2k}(\frac{1}{2})\sum_{l=0}^{\infty}\frac{2^{2l}}{(2l)!}b_{k\,,\,l}x^{2l}\right)\,.
\end{eqnarray*}
We then change the order of summation to get
\begin{equation}\label{eq:perm}
(-1)^{m+1}B_{2m}(\,\frac{1}{2}+\frac{1}{\pi}\mbox{Arcsin}\, x)=\frac{(2m)!}{(2\pi)^{2m}}\sum_{l=0}^{\infty}\frac{2^{2l}}{(2l)!}f_{m\,,\,l}x^{2l}
\end{equation}
where
\[
f_{m\,,\,l}=(-1)^{m+1}\sum_{k=0}^m \frac{(2\pi )^{2m-2k}}{(2m-2k)!}B_{2m-2k}(\frac{1}{2})\, b_{k\,,\,l}\,.
\]
We prove by recurrence over $m$ that $f_{m\,,\,l}>0$ for $m,\,l\geq 1$. To this end we set 
$\displaystyle g_{m,l}=\frac{f_{m,l}}{((l-1)!)^2}$ for $m,\,l\geq 1$ and 
since $b_{0\,,\,l}=0$ for $l\geq 1$ we have
\begin{eqnarray*}
g_{m+1\,,\,l+1}&=&\frac{(-1)^{m+2}}{(l!)^2}\sum_{k=1}^{m+1}\frac{(2\pi )^{2m+2-2k}}{(2m+2-2k)!}B_{2m+2-2k}(\frac{1}{2})\,b_{k\,,\,l+1}\\&=&
\frac{(-1)^{m+2}}{(l!)^2}\sum_{k=1}^{m+1}\frac{(2\pi )^{2m+2-2k}}{(2m+2-2k)!}B_{2m+2-2k}(\frac{1}{2})\,\Big(b_{k-1\,,\,l}+l^2\,b_{k\,,\,l}\Big)\\&=&
\frac{(-1)^{m+2}}{(l!)^2}\sum_{k=1}^{m+1}\frac{(2\pi )^{2m+2-2k}}{(2m+2-2k)!}B_{2m+2-2k}(\frac{1}{2})\,b_{k-1\,,\,l}+g_{m+1\,,\,l}\\&=&
-\frac{(-1)^{m+1}}{(l!)^2}\sum_{k=0}^{m}\frac{(2\pi )^{2m-2k}}{(2m-2k)!}B_{2m-2k}(\frac{1}{2})\,b_{k\,,\,l}+g_{m+1\,,\,l}\\&=&
-\frac{1}{l^2}g_{m\,,\,l}+g_{m+1\,,\,l}
\end{eqnarray*}
and this implies that
\[
g_{m+1\,,\,l+1}+\frac{1}{l^2}g_{m\,,\,l}=g_{m+1\,,\,l}\hspace{3mm}\mbox{for }l\geq 1.
\]
We have $\displaystyle g_{1\,,\,l}\,=\,f_{1\,,\,l}\,=1$ for all $l\geq 1$. Let us suppose that $g_{m\,,\,l}>0$ for all $l\geq 1$. Then $g_{m+1\,,\,l+1}<g_{m+1\,,\,l}$ and it follows that  
 $\displaystyle g_{m+1\,,\,l}>\lim_{l \to \infty}g_{m+1\,,\,l}$\,.
 Thanks to Sublemma \ref{42} we have
\begin{eqnarray}\label{eq:lim}
\lim_{l \to \infty}g_{m+1\,,\,l}&=&(-1)^{m+2}\sum_{k=1}^{m+1} \frac{(2\pi )^{2m+2-2k}}{(2m+2-2k)!}B_{2m+2-2k}(\frac{1}{2})\,\frac{\pi^{2k-2}}{(2k-1)!}\nonumber\\
&=&(-1)^{m+2}\pi^{2m}\sum_{k=1}^{m+1} \frac{2^{2m+2-2k}}{(2m+2-2k)!(2k-1)!}B_{2m+2-2k}(\frac{1}{2})
\end{eqnarray}
and using $\displaystyle B_j(\frac{1}{2})=0$ for all odd $j$ and the formula
\[
B_n(x+y)=\sum_{j=0}^n \binom{n}{j}B_j(x)y^{n-j}
\] 
we check that the sum which appears in (\ref{eq:lim}) is equal to 
\begin{eqnarray*}
\sum_{j=0}^{2m+1}\frac{2^j}{j!(2m+1-j)!}B_j(\frac{1}{2}) &=&
 \frac{2^{2m+1}}{(2m+1)!}\sum_{j=0}^{2m+1}\binom{2m+1}{j} B_j(\frac{1}{2})\left( \frac{1}{2}\right)^{2m+1-j}\\
 &=&\frac{2^{2m+1}}{(2m+1)!}B_{2m+1}(1)=0\,.
\end{eqnarray*}
Hence $g_{m\,,\,l}>0$ for $m,l\geq 1$ and this implies, thanks to (\ref{eq:perm}), that the numbers $e_{m\,,\,l}$ defined by (\ref{eq:defin}) are positive for $m,l\geq 1$.
\end{proof}
We are now in position to prove main properties of functions $\Psi_{2l-1}^*(\cdot,\ldots,\cdot,\pm a)$.
\begin{lemma}\label{5}
Let $\Psi_{2l-1}^*(\cdot,\ldots,\cdot,\pm a)$ be the functions defined in Lemma \ref{3}. Then 
\begin{enumerate}[\indent]
\item[a)] $\displaystyle(-1)^{n+l+1}\Psi_{2l-1}^*(x_{-n},\ldots,x_n,\pm a)>0$ for $(x_{-n},\ldots,x_n)\in \Omega$\,.
\item[b)] $\displaystyle(-1)^{n+l+1}\frac{\partial}{\partial x_i}\Psi_{2l-1}^*(x_{-n},\ldots,x_n,\pm a)$
 is negative if $i=-n,\ldots,-1$ and positive if $i~=~1,\ldots,n$ for $(x_{-n},\ldots,x_n)\in \Omega$.
 \footnote{By $\displaystyle \frac{\partial}{\partial x_i}\Psi_{2l-1}^*(\cdot,\ldots,\cdot,\pm a)$ we mean the continuous extension of $\displaystyle \frac{\partial}{\partial x_i}\Psi_{2l-1}(\cdot,\ldots,\cdot,\pm a)$ to $\Omega$. }
\item[c)]For $(x_{-n},\ldots,x_n)\in \omega$ and $c$ such that $0<c\,a<n\pi$ and $c\,a\neq j\,\pi$ where $j=1,\ldots,n-1$ we have the upper bound
\footnote{ It follows from the proof that the singularities of the right-hand side when $c\,a=j\,\pi$ for $j=1,\ldots,n-1$ are removable.}
\[
\hspace{-3pt} \sum_{k=1}^{m}(-1)^{n+k+1}\Psi_{2k-1}^*(x_{-n},\ldots,x_n,a)c^{2k-1}+\sum_{k=1}^{m}(-1)^{n+k+1}\Psi_{2k-1}^*(x_{-n},\ldots,x_n,-a)c^{2k-1}
 \]
\begin{equation}\label{eq:sumsimple}
\leq\Big{(}(-1)^n\displaystyle\prod_{\genfrac{}{}{0 cm}{2}{-n\leq j\leq n}{\; j\neq 0}}\sin (\pi\,\frac{x_j}{2a})\Big{)}\Big{(}\frac{-1}{\sin(c\,a)}\sum_{k=-n}^n \frac{\cos(c\,x_k)}{\displaystyle \prod_{\genfrac{}{}{0 cm}{2}{-n\leq j\leq n}{\; j\neq k}}\Big{(}\sin(\pi \frac{x_k}{2a})-\sin(\pi \frac{x_j}{2a})\Big{)}}\Big{)}\,.
\end{equation}
\end{enumerate}
\end{lemma}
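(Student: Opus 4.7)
My plan rests on the substitution $y_k := \sin(\pi x_k/(2a))$, which is monotone in $x_k$, has $y_0 = 0$, and satisfies $\sin^2(\pi/4 \pm \pi x_k/(4a)) = (1 \pm y_k)/2$. Combined with $B_{2l}(1-z) = B_{2l}(z)$, this rewrites
\[
\Psi^*_{2l-1}(x_{-n},\ldots,x_n,\pm a) \;=\; \frac{2(4a)^{2l-1}}{(2l)!\,\alpha_0}\sum_{k=-n}^n \alpha_k\,H_l^{\pm}(y_k),
\]
where $H_l^{\pm}(y) := F_l\bigl(\tfrac{1\pm y}{2}\bigr)$ and $F_l(t) := B_{2l}\bigl(\tfrac{1}{2} + \tfrac{1}{\pi}\arcsin\sqrt{t}\bigr)$. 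The inner sum is the divided difference of $H_l^{\pm}$ at the $2n+1$ nodes $y_{-n} < \cdots < y_n$ in $(-1,1)$. A direct sign count gives $Q := 1/\alpha_0 = \prod_{j\neq 0}\sin(\pi x_j/(2a))$ with $\sign(Q) = (-1)^n$.

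For (a), Lemma~\ref{2}(b) evaluates the divided difference as $(H_l^{\pm})^{(2n)}(\eta)/(2n)! = 2^{-2n}F_l^{(2n)}((1\pm\eta)/2)/(2n)!$ with $(1\pm\eta)/2\in(0,1)$, and Lemma~\ref{4} gives $(-1)^{l+1}F_l^{(2n)} > 0$ on $[0,1)$. Multiplying the signs yields $(-1)^{n+l+1}\Psi^*_{2l-1}(\pm a) > 0$. For (b), writing $g^\pm := \tfrac{1}{\alpha_0}\sum_k\alpha_k H_l^{\pm}(y_k)$ and using $\partial_{x_i} = (dy_i/dx_i)\partial_{y_i}$ with $dy_i/dx_i > 0$, together with
\[
\partial_{y_i}(Q\,g^{\pm}) \;=\; \frac{Q}{y_i}\bigl[g^{\pm} + y_i\,\partial_{y_i}g^{\pm}\bigr] \;=\; \frac{Q}{y_i}\,\partial_{y_i}\bigl((y_i - y_0)\,g^{\pm}\bigr)
\]
(using $y_0 = 0$ and $\partial_{y_i}Q = Q/y_i$), the second statement of Lemma~\ref{2}(b) yields $(Q/y_i)\cdot(H_l^{\pm})^{(2n)}(\xi)/(2n)!$ for some $\xi$ in the convex hull. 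Since $\sign(Q/y_i) = (-1)^n$ for $i>0$ and $(-1)^{n+1}$ for $i<0$, Lemma~\ref{4} produces the sign pattern claimed in (b); continuity on $\Omega$ is automatic from Lemma~\ref{2}(a).

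For (c) I plug the absolutely convergent Fourier expansion $B_{2k}(\alpha) = \tfrac{(-1)^{k+1}\,2(2k)!}{(2\pi)^{2k}}\sum_{\nu\geq 1}\nu^{-2k}\cos(2\pi\nu\alpha)$ into the two values $\alpha_{\pm} := \tfrac{1}{4}\pm\tfrac{x_i}{4a}$ occurring in $\Psi^*_{2k-1}(\mp a)$; odd $\nu$ cancel in $B_{2k}(\alpha_+)+B_{2k}(\alpha_-)$, and setting $\nu = 2j$ yields
\[
(-1)^{k+1}\bigl[B_{2k}(\alpha_+)+B_{2k}(\alpha_-)\bigr] \;=\; \frac{4(2k)!}{(4\pi)^{2k}}\sum_{j\geq 1}\frac{(-1)^j\cos(j\pi x_i/a)}{j^{2k}}.
\]
Interchanging the finite $k$- and $i$-sums with the $j$-sum and summing the geometric series in $k$ gives
\[
\sum_{k=1}^m(-1)^{k+1}\bigl[\Psi^*_{2k-1}(a)+\Psi^*_{2k-1}(-a)\bigr]\,c^{2k-1} \;=\; 2ac\sum_{j\geq 1}\frac{(-1)^j\,V_j\,\bigl[1-(ac/\pi j)^{2m}\bigr]}{(\pi j)^2-(ac)^2},
\]
where $V_j := \sum_i \mu_i\cos(j\pi x_i/a)$. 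The identity $\cos(j\pi x_i/a) = T_j(1 - 2y_i^2)$ exhibits $\cos(j\pi x_i/a)$ as a polynomial of degree $2j$ in $y_i$, and the Lagrange-type weights $\mu_i = \alpha_i/\alpha_0$ annihilate polynomials of degree $<2n$ in $y$; hence $V_j = 0$ for $1\leq j\leq n-1$, only $j\geq n$ contributes, and then $ac/\pi j < 1$, which legitimates the interchange.

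Using the classical Fourier identity $\cos(cx)/\sin(ca) = 1/(ca) + \sum_{\nu\geq 1} 2ac(-1)^\nu\cos(\nu\pi x/a)/((ca)^2-(\nu\pi)^2)$ together with $\sum_i \mu_i = 0$, the ``$1$''-part of $[1 - (ac/\pi j)^{2m}]$ sums to $-(\sin ca)^{-1}\sum_i \mu_i\cos(cx_i)$; multiplying by $(-1)^n$ and recalling $Q = \prod_{j\neq 0}\sin(\pi x_j/(2a))$, this is exactly the right-hand side of (\ref{eq:sumsimple}). Thus $S_m := \sum_{k=1}^m(-1)^{n+k+1}[\Psi^*_{2k-1}(a)+\Psi^*_{2k-1}(-a)]c^{2k-1}$ equals the right-hand side minus the tail $2ac\sum_{j\geq n}(-1)^{n+j}V_j(ac/\pi j)^{2m}/((\pi j)^2-(ac)^2)$. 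The inequality now follows from (a): each summand of $S_m$ is positive, so $S_m$ is non-decreasing in $m$, and dominated convergence sends the tail to $0$ (since $(ac/\pi j)^{2m}\to 0$ for $j\geq n$), so $\lim_m S_m$ equals the right-hand side; hence $S_m \leq$ right-hand side for every $m$. The main obstacle is managing the apparent divergence of the individual Fourier $k$-sums when $ac > \pi/2$: it is rescued precisely by the polynomial vanishing $V_j = 0$ for $j<n$, which kills the problematic terms.
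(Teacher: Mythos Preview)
Your proof is correct and follows essentially the same route as the paper: parts (a) and (b) match the paper's argument verbatim in substance (substitute $y_k=\sin(\pi x_k/2a)$, rewrite $\Psi^*_{2l-1}(\pm a)$ as a divided difference of $B_{2l}\bigl(\tfrac12+\tfrac1\pi\arcsin\sqrt{(1\pm y)/2}\bigr)$, then apply Lemmas~\ref{2} and~\ref{4}, and for (b) factor out $y_i$ before differentiating). For (c) the paper uses the same four ingredients---Fourier expansion of $B_{2k}$, Chebyshev vanishing of the low-frequency coefficients, monotonicity in $m$ from (a), and the classical series for $\cos(cx)/\sin(ca)$---only in a slightly different order (it first establishes the Chebyshev expansion~(\ref{eq:tcheby}) for general $x$ and then specializes to $x=\pm a$, whereas you add the $\pm a$ contributions first to kill odd frequencies and apply the Chebyshev argument afterward), which amounts to the same computation.
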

\newpage
\begin{proof}
{~}
\begin{enumerate}[\indent]
\item[a)]
For $(x_{-n},\ldots,x_n)\in \omega$ we have
\[
\Psi_{2l-1}^*(x_{-n},\ldots,x_n,\pm a)= 2\frac{(4a)^{2l-1}}{(2l)!}\,\sum_{k=-n}^{n}\,\mu_k\,B_{2l}\big{(}\frac{1}{2}+\frac{\pm a+x_k}{4a}\big{)}
\]
since the function $\displaystyle B_{2m}\big{(}\frac{1}{2}+t \big{)}$ is even
and then
\[
(-1)^{n+l+1}\,\Psi_{2l-1}^*(x_{-n},\ldots,x_n,\pm a) 
\]
\[
=2\frac{(4a)^{2l-1}}{(2l)!}\left ( \frac{(-1)^n}{\alpha_0}\right )\sum_{k=-n}^n{\alpha_k(-1)^{l+1} B_{2l}\big{(}\frac{1}{2}+\frac{\pm a+x_k}{4a}\big{)}}\,.
\]
The first two terms of the right-hand side are positive and the third term writes 
$\displaystyle \sum_{k=-n}^n \alpha_k h(\sin(\pi \frac{x_k}{2a}))$ where 
\[
h(t)=(-1)^{l+1}B_{2l}(\frac{3}{4}\pm\frac{1}{2\pi}\mbox{Arcsin}\,t )\hspace{2 mm} \mbox{for} \hspace{2 mm} t\in[-1,1].
\]
The identity
\[
\frac{3}{4}\pm\frac{1}{2\pi}\mbox{Arcsin}\,t=\frac{1}{2}+
\frac{1}{\pi}\mbox{Arcsin}\,\sqrt{\frac{1\pm t}{2}}\hspace{2 mm} \mbox{for} \hspace{2 mm} t\in[-1,1]
\]
together with Lemma \ref{4} show that $h^{(2n)}$ is positive on $]-1,1[$ and the conclusion holds by Lemma~\ref{2}.
\item[b)] For $(x_{-n},\ldots,x_n)\in \omega$ and with the notations of a) we have 
\[
(-1)^{n+l+1}\frac{\partial}{\partial x_i}\Psi_{2l-1}^*(x_{-n},\ldots,x_n,\pm a)
\]
\[
=2\frac{(4a)^{2l-1}}{(2l)!}\left ((-1)^n \displaystyle \prod_{\genfrac{}{}{0 cm}{2}{-n\leq j\leq n}{ j\neq 0,i}}\sin ( \pi \frac{x_j}{2a})\right)\frac{\partial}{\partial x_i}\big{(}\sin ( \pi \frac{x_i}{2a})\sum_{k=-n}^n\alpha_k\, h(\sin(\pi \frac{x_k}{2a}))\big{)}.
\]
The first and the third term of the right-hand side are positive whereas the second is negative if $i\leq -1$ and positive if $i\geq 1$. The conclusion holds by Lemma~\ref{2}.
\item[c)]
The use of the Fourier series expansion
\[
B_{2l}(x)=(-1)^{l+1}2((2l)!)\sum_{j=1}^{\infty}\frac{1}{(2j\pi)^{2l}}\cos(2j\pi x)\hspace{0.5cm} \mbox{for }x\in[0,1]
\]
and the identity 
$\displaystyle \cos \alpha +\cos \beta=2\cos(\frac{\alpha +\beta}{2})\cos(\frac{\alpha -\beta}{2})$  lead to the expression 
\[
\Psi^*_{2l-1}(x_{-n},\ldots,x_n,x)
\] 
\[=(-1)^{l+1}\, 2\, \frac{(2a)^{2l-1}}{\alpha_0 \pi^{2l}}\,\sum_{j=1}^{\infty}\,\frac{1}{j^{2l}}\Big{(}\sum_{k=-n}^{n}\alpha_k\,\cos(j\pi(\frac{1}{2}+\frac{x_k}{2a}))\Big{)}\cos(j\pi(\frac{1}{2}+\frac{x}{2a}))\,.
\]
Using the identity $\displaystyle \cos(j\pi(\frac{1}{2}+y))=(-1)^jT_j(\sin(\pi y))$ and setting\\
 $\displaystyle a_{j,k}=(-1)^j\;T_j(\sin(\pi \frac{x_k}{2a}))$ we have
$\displaystyle \sum_{k=-n}^{n}\alpha_k\,a_{j,k}=0$ for $j=1,\ldots,2n-1$ and therefore 
\begin{equation}\label{eq:tcheby}
\Psi^*_{2l-1}(x_{-n},\ldots,x_n,x)=(-1)^{l+1}\, 2\, \frac{(2a)^{2l-1}}{\alpha_0 \pi^{2l}}\,\sum_{j=2n}^{\infty}\,\frac{1}{j^{2l}}\Big{(}\sum_{k=-n}^{n}\alpha_k\,a_{j,k}\Big{)}\cos(j\pi(\frac{1}{2}+\frac{x}{2a}))\,.
\end{equation}
It follows that
\[
\sum_{k=1}^{\infty}(-1)^{n+k+1}\Psi_{2k-1}^*(x_{-n},\ldots,x_n,\pm a)c^{2k-1}
\]
\[
=
\frac{(-1)^{n}}{\alpha_0}\, 2\sum_{p=2n}^{\infty}\,\sum_{j=-n}^n\alpha_j\,\,a_{p\,,\,j}\,(\mp 1)^p\,\frac{1}{2 a c}\sum_{k=1}^{\infty}\left(\frac{2 a c }{p\pi}\right)^{2k}
\]
and by a) we have 
\[
\hspace{- 1.23 pt} \sum_{k=1}^{m}(-1)^{n+k+1}\Psi_{2k-1}^*(x_{-n},\ldots,x_n,a)c^{2k-1}+\sum_{k=1}^{m}(-1)^{n+k+1}\Psi_{2k-1}^*(x_{-n},\ldots,x_n,-a)c^{2k-1}
 \]
\begin{eqnarray*}
&\leq&\frac{(-1)^{n}}{\alpha_0}\, 2\,\sum_{p=2n}^{\infty}\,\sum_{j=-n}^n\alpha_j\,a_{p\,,\,j}\,((-1)^p+1)\,\frac{2ac}{p^2\pi^2-4 a^2 c^2}\\
&=&\frac{(-1)^{n}}{\alpha_0}\, 2\,\sum_{q=n}^{\infty}\,\sum_{j=-n}^n\alpha_j\,a_{2q\,,\,j}\,\frac{ac}{q^2\pi^2-a^2 c^2}\\
&=&\frac{(-1)^{n}}{\alpha_0}\,2\,\sum_{q=1}^{\infty}\,\sum_{j=-n}^n\alpha_j\,a_{2q\,,\,j}\,\frac{ a c}{q^2\pi^2- a^2 c^2}\\
&=&\frac{(-1)^{n}}{\alpha_0}\, 2\,\sum_{q=1}^{\infty}\,\sum_{j=-n}^n \alpha_j\,\,\cos(2q\pi(\frac{1}{2}+\frac{x_j}{2a}))\,\frac{ac}{q^2\pi^2-a^2c^2}\\
&=&\frac{(-1)^{n+1}}{\alpha_0}\, 2\,\sum_{q=1}^{\infty}\,\sum_{j=-n}^n \alpha_j\,(-1)^{(q-1)}\,\cos(q\frac{\pi x_j}{a})\,\frac{ac}{q^2\pi^2-a^2c^2}\\
\end{eqnarray*}
where we use successively the absolute convergence to change the order of summation and the relations $\displaystyle \sum_{j=-n}^n\alpha_j a_{2q\,,\,j} =0$ for $q=1,\cdots,n-1$.\\ We finally make use of the identity ([2], formula (17.3.10))
\[
\sum_{k=1}^{\infty}\displaystyle\frac{(-1)^{k-1}}{k^2 r^2-s^2}\cos(kx)=\displaystyle\frac{\pi}{2rs}\displaystyle\frac{\cos(\frac{s}{r} x)}{\sin(\pi\frac{s}{r})}-\displaystyle\frac{1}{2s^2}\hspace{3 mm}\mbox{where}\hspace{3 mm}\frac{s}{r}\not\in \mathbb{Z}\hspace{3 mm}\mbox{and}\hspace{3 mm} -\pi\leq x\leq \pi
\]
to check that the last term of the above equalities is equal to the right-hand side of (\ref{eq:sumsimple}).
\end{enumerate}
\end{proof}
The next step is to bound the right-hand side of (\ref{eq:sumsimple}) for particular values~of~$x_k$.
\begin{lemma}\label{6}
For $\epsilon \in ]\,0,\log 2\,[$ and $c \in ]\,\frac{3}{4},1\,[$  we introduce $\delta=1-c$, $ \eta = (\,\log 2 - \epsilon\,)\frac{\delta}{\vert \log\delta \vert}$, $ a~=~(n-\frac{1}{2})\,\frac{\pi}{c}$ and $ s^*=\eta\,a$. Further let $x_0^*=0$ and $(x_{-n}^*,\ldots,x_n^*)\in \omega$ such that
\[
 \vert x_{\pm k}^* \vert = (k  -1)\pi +s^* \hspace{3mm}\mbox{for}\hspace{3mm} k=1,\ldots,n. 
 \]
 Then there exists  $0<c_{\epsilon}<1$ such that for $\displaystyle c_{\epsilon}<c<1-\frac{1}{2n}$ we have 
\begin{equation}\label{eq:prod}
0<(-1)^n\,\displaystyle\prod_{\genfrac{}{}{0 cm}{2}{-n\leq j\leq n}{\, j\neq 0}}\sin(\pi\,\frac{x_j^*}{2a})\,< 2^{-2n}
\end{equation}
and
\begin{equation}\label{eq:divided}
0<\frac{-1}{\sin(c\,a)}\sum_{k=-n}^n \frac{\cos(c\,x_k^*)}{\displaystyle \prod_{\genfrac{}{}{0 cm}{2}{-n\leq j\leq n}{\, j\neq k}}\Big{(}\sin(\pi \frac{x_k^*}{2a})-\sin(\pi \frac{x_j^*}{2a})\Big{)}}< 2^{2n-1}\,.
\end{equation}
\end{lemma}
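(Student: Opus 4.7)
The configuration has the symmetry $x_{-k}^*=-x_k^*$, which I exploit throughout. Set $\theta_k = \pi x_k^*/(2a) = cx_k^*/(2n-1)$ and $y_k = \sin\theta_k$; using $cs^* = \eta(n-\tfrac12)\pi$ one obtains $\theta_k = (k-1)c\pi/(2n-1) + \eta\pi/2 \in (0,\pi/2)$ for $k\ge 1$. Moreover $\sin(ca) = \sin((n-\tfrac12)\pi) = (-1)^{n+1}$, so $-1/\sin(ca) = (-1)^n$.

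For (\ref{eq:prod}), pairing $j$ with $-j$ collapses the product:
\[
(-1)^n\prod_{j\neq 0}\sin\theta_j \;=\; \prod_{k=1}^n \sin^2\theta_k,
\]
which is immediately positive. To show $\prod_{k=1}^n \sin\theta_k < 2^{-n}$, I would estimate $\sum_k \log\sin\theta_k$ by comparing it to the Riemann sum for $\tfrac{2n-1}{c\pi}\int_0^{c\pi/2}\log\sin\phi\,d\phi$, whose leading term is $-\tfrac{2n-1}{2}\log 2$, exactly at the threshold. Strict inequality follows from three sources: the $k=1$ factor $\sin(\eta\pi/2)$ is much smaller than the bulk Riemann contribution; the bound $c<1$ shrinks each $\theta_k$ relative to the extremal $c=1$ configuration; and $c_\epsilon$ is chosen close enough to $1$ to absorb Euler--Maclaurin boundary corrections uniformly in~$n$.

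For (\ref{eq:divided}), using $\cos(cx_{-k}^*) = \cos(cx_k^*)$ and $y_{-k}=-y_k$, pairing $\pm k$ yields
\[
S := \sum_{k=-n}^n\frac{\cos(cx_k^*)}{\prod_{j\neq k}(y_k-y_j)} \;=\; \frac{(-1)^n}{\prod_{k=1}^n y_k^2} \,+\, \sum_{k=1}^n\frac{\cos(cx_k^*)}{y_k^2\prod_{\substack{j\neq k\\ 1\le j\le n}}(y_k^2 - y_j^2)},
\]
which is precisely the $n$-th divided difference at $z_0=0$, $z_k = y_k^2$ ($k\ge 1$) of $\tilde G(z) := \cos\bigl((2n-1)\arcsin\sqrt z\bigr) = T_{2n-1}(\sqrt{1-z})$. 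Since $T_{2n-1}$ is odd, $\tilde G(z) = \sqrt{1-z}\,P_{n-1}(z)$ with $P_{n-1}$ a polynomial of degree $n-1$; Lemma~\ref{2}(b) then gives $(-1)^n S = (-1)^n \tilde G^{(n)}(\xi)/n!$ for some $\xi\in (0,z_n)$, and Leibniz applied to this factorization, combined with the explicit formula $(\sqrt{1-z})^{(k)} = -(2k-3)!!\,2^{-k}(1-z)^{1/2-k}$, yields both the sign and magnitude of $(-1)^n S$.

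The main obstacle is the sharp near-cancellation behind the bound $2^{2n-1}$: by (\ref{eq:prod}), $1/\prod y_k^2 > 2^{2n}$ already, so the second summand in $S$ must nearly cancel the first to leave only $O(2^{2n-1})$, with the surviving sign $(-1)^n$ working in our favor. I expect that rigorously establishing this cancellation will require either a precise evaluation of $\tilde G^{(n)}(\xi)/n!$ at an appropriately chosen $\xi$ near $z_n$, or a return to the Fourier-series representation derived in the proof of Lemma~\ref{5}(c) combined with the identity~(17.3.10) cited there, bounding term by term using $|\cos(cx_k^*)|\le 1$. The tightest regime is $c \to 1 - 1/(2n)$, and it is this regime that ultimately drives the choice of $c_\epsilon(\epsilon)$.
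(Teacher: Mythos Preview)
Your reduction is essentially the paper's: the symmetry $x_{-k}^*=-x_k^*$ collapses both sides, and the sum in (\ref{eq:divided}) becomes the $n$-th divided difference of $\tilde G(z)=\cos\bigl((2n-1)\arcsin\sqrt{z}\bigr)$ at the nodes $z_0=0$, $z_k=\sin^2\theta_k$. The positivity of $(-1)^nS$ then follows from Sublemma~\ref{65} via Lemma~\ref{2}, exactly as you indicate. Your argument for (\ref{eq:prod}) is the right integral comparison, though you leave the role of $\epsilon$ implicit; in the paper the Euler--Maclaurin correction is packaged as a function $h(\delta)$ with $h(0)=0$ and $h'(0^+)=-\epsilon$, which is precisely what singles out $c_\epsilon$.

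The genuine gap is the upper bound $2^{2n-1}$ in (\ref{eq:divided}), which you yourself flag as unresolved. Neither of your proposed routes works as stated. Bounding $\tilde G^{(n)}(\xi)/n!$ via the Leibniz expansion of $\sqrt{1-z}\,P_{n-1}(z)$ fails because the mean-value point $\xi$ lies somewhere in $(0,z_n)$ with $z_n\to 1$ as $n\to\infty$ (and $c\to 1$), and the derivatives of $\sqrt{1-z}$ blow up there; you have no control over $\xi$. Returning to the Fourier series and bounding term by term with $|\cos(cx_k^*)|\le 1$ discards exactly the cancellation you correctly identified as essential, since the $k=0$ term alone already exceeds $2^{2n}$.

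The paper's device is different and is the missing idea. One first observes (again via Sublemma~\ref{65} and Lemma~\ref{2}) that the divided difference $\phi(z_0,\dots,z_n)=(-1)^nS$ is \emph{monotone increasing in each node}. This lets one push $z_n\to 1$ and, after a further symmetrisation step (Sublemma~\ref{63}, exploiting that the actual nodes satisfy $t_{n/2-j}+t_{n/2+j}<1$), push all nodes outward to the Chebyshev-type configuration $t_k^*=\sin^2(k\pi/(2n))$. At \emph{that} specific configuration the weights $\gamma_k=\bigl(\prod_{j\neq k}(t_k^*-t_j^*)\bigr)^{-1}$ are computed exactly (Sublemma~\ref{64}): $|\gamma_0|=|\gamma_n|=2^{2n-2}/n$ and $|\gamma_k|=2^{2n-1}/n$ for $0<k<n$, so the trivial bound $\sum|\gamma_k|\le 2^{2n-1}$ already suffices. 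The point is that the sharp constant is not obtained by estimating a derivative at an unknown $\xi$, but by monotonically deforming the nodes to an explicitly solvable extremal configuration.
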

The proof of (\ref{eq:divided}) requires preliminary results which we state in the next sublemmas.
\begin{sublemma}\label{63}
{~}
\begin{enumerate}[\indent]
\item[a)]Let $n\geq 2$ be even and let $0\leq t_0<t_1<\ldots<t_n\leq1$ and $0\leq t^*_0<t^*_1<\ldots<t^*_n\leq1$ such that
\[
\displaystyle t_{\frac{n}{2}+j}+t_{\frac{n}{2}-j}=1 \hspace{3 mm}\mbox{and}\hspace{3 mm}t^*_{\frac{n}{2}+j}+t^*_{\frac{n}{2}-j}=1\hspace{3 mm}\mbox{for}\hspace{3 mm}j=0,1,\ldots,\frac{n}{2}
\]
and
\[
t_{\frac{n}{2}+j}\leq t^*_{\frac{n}{2}+j}\hspace{3 mm}\mbox{for}\hspace{3 mm}j=1,2,\ldots,\frac{n}{2}.
\]
Let further $f\in C[0,1]\bigcap C^{n+2}[0,1[$ such that $f^{(n+2)}\geq 0$ on $[0,1[$.
Then
\[
\sum_{k=0}^{n}\frac{f(t_k)}{\displaystyle \prod_{\genfrac{}{}{0 cm}{2}{0\leq j\leq n}{\, j\neq k}}(t_k-t_j)}
\leq
\sum_{k=0}^{n}\frac{f(t^*_k)}{\displaystyle \prod_{\genfrac{}{}{0 cm}{2}{0\leq j\leq n}{\, j\neq k}}(t^*_k-t^*_j)}\,.
\]
\item[b)]Under the same assumptions on $f$, a similar result holds for odd integers $n$. \\
\end{enumerate}
\end{sublemma}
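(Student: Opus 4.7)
The plan is to show the divided difference
\[
D(s_0,\ldots,s_n)=\sum_{k=0}^n \frac{f(s_k)}{\prod_{j\neq k}(s_k-s_j)}
\]
is non-decreasing along the linear interpolation $s_k(\lambda) = (1-\lambda) t_k + \lambda t^*_k$, $\lambda \in [0,1]$, between the two symmetric configurations. The symmetry $s_{n/2+j}(\lambda) + s_{n/2-j}(\lambda) = 1$ persists throughout, the center is fixed at $s_{n/2}(\lambda) = 1/2$, and the strict ordering is preserved because a convex combination of two strictly ordered tuples is still strictly ordered. Setting $\phi(\lambda) = D(s_0(\lambda),\ldots,s_n(\lambda))$, the claim reduces to $\phi(0) \leq \phi(1)$.

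Differentiating and exploiting the antisymmetry $t^*_{n/2+j} - t_{n/2+j} = -(t^*_{n/2-j} - t_{n/2-j})$ together with $t^*_{n/2} = t_{n/2} = 1/2$, I would regroup $\phi'(\lambda)$ into symmetric-pair contributions:
\[
\phi'(\lambda) = \sum_{j=1}^{n/2} (t^*_{n/2+j} - t_{n/2+j}) \Big(\frac{\partial D}{\partial s_{n/2+j}} - \frac{\partial D}{\partial s_{n/2-j}}\Big).
\]
By the classical identity $\partial D/\partial s_i = f[s_0, \ldots, s_i, s_i, \ldots, s_n]$ (divided difference with $s_i$ repeated), each parenthesized term is a difference of two $(n+2)$-point divided differences agreeing on $n+1$ entries. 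Applying the recursion $f[A \cup \{a\}] - f[A \cup \{b\}] = (a - b)\,f[A \cup \{a, b\}]$ with $A = (s_0, \ldots, s_n)$, $a = s_{n/2+j}$, $b = s_{n/2-j}$ yields
\[
\frac{\partial D}{\partial s_{n/2+j}} - \frac{\partial D}{\partial s_{n/2-j}} = (s_{n/2+j} - s_{n/2-j})\, f[s_0, \ldots, s_n, s_{n/2+j}, s_{n/2-j}],
\]
an order-$(n+2)$ divided difference which, by the mean-value theorem for divided differences (Lemma \ref{2}(b)), equals $f^{(n+2)}(\zeta)/(n+2)!$ for some $\zeta$ in the convex hull of the $s_k$'s. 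Since $f^{(n+2)} \geq 0$ on $[0, 1[$, $s_{n/2+j} > s_{n/2-j}$, and $t^*_{n/2+j} \geq t_{n/2+j}$, every summand in $\phi'(\lambda)$ is non-negative, so $\phi$ is non-decreasing.

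The main obstacle I anticipate is that if $t_n = 1$ (forcing $t_0 = 0$ by symmetry), the convex hull of the $s_k(\lambda)$'s includes $1$, where $f^{(n+2)}$ is not assumed defined. I would bypass this by approximation: replace $(t_0, t_n) = (0, 1)$ by the symmetric perturbation $(\delta, 1-\delta)$ with small $\delta > 0$ (and similarly for the starred configuration if needed), prove the monotonicity for the perturbed configurations whose points all lie in $[\delta, 1-\delta] \subset [0, 1[$, and pass to the limit $\delta \to 0$ using only the continuity of $f$ on $[0, 1]$, which ensures the continuity of $D$ at pairwise distinct points. Integrating $\phi'(\lambda) \geq 0$ on $[0, 1]$ then gives $\phi(0) \leq \phi(1)$, which is part (a). Part (b) for odd $n$ follows by the identical argument with the pairs $(s_j, s_{n-j})$ for $j = 0, \ldots, (n-1)/2$ replacing the even-$n$ pairing; there is no fixed central point but everything else is unchanged.
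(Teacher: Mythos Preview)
Your argument is correct and follows essentially the same route as the paper: both show the divided difference is nondecreasing as the symmetric node configuration spreads outward, by expressing the symmetric-pair directional derivative as a nonnegative multiple of an order-$(n+2)$ divided difference and invoking $f^{(n+2)}\ge 0$, with the boundary case $t_n=1$ handled by restricting to $t_n<1$ and passing to the limit via continuity of $f$ on $[0,1]$. The only difference is in execution---where you use the identities $\partial_i D=f[\dots,s_i,s_i,\dots]$ and $f[A,a]-f[A,b]=(a-b)\,f[A,a,b]$, the paper places the varying pair $(1-t_n,t_n)$ in the last two slots of the Hermite--Genocchi integral, differentiates, and integrates by parts once to reach the same positivity.
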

\begin{proof}
The proofs of a) and b) are similar so we only prove a). Let $g$ be the function defined for pairwise distinct $x_0,\ldots,x_n\in [0,1]$ by
\[
g(x_0,\ldots,x_{n})=\sum_{k=0}^{n}\frac{f(x_k)}{\displaystyle \prod_{\genfrac{}{}{0 cm}{} {0\leq j\leq n}{\; j\neq k}}(x_k-x_j)}\,.
\]
Since divided differences are invariant by permutation and since $f$ is continuous on $[0,1]$ it is sufficient to prove that
\[
\frac{\partial}{\partial t_n }g(\frac{1}{2},1-t_{\frac{n}{2}+1},\,t_{\frac{n}{2}+1},1-t_{\frac{n}{2}+2},\,t_{\frac{n}{2}+2},\ldots,1-t_{n-1},\,t_{n-1},1-t_n,\,t_n)\geq 0
\]
holds for pairwise distinct $t_{\frac{n}{2}+1},\ldots,t_n\in ]\frac{1}{2},1[$. We complete the proof using the representation formula and integrating by parts to get
\[
\ds \frac{\partial}{\partial t_n }g(\frac{1}{2},1-t_{\frac{n}{2}+1},\,t_{\frac{n}{2}+1},1-t_{\frac{n}{2}+2},\,t_{\frac{n}{2}+2},\ldots,1-t_{n-1},\,t_{n-1},1-t_n,\,t_n)=
\]
\[
\hspace{-2mm}\ds \displaystyle\int\limits_0^1\,d\tau_1\cdots\int\limits_0^{\tau_{n-1}}\,
f^{(n+1)}\,(\frac{1}{2}+\tau_1(\frac{1}{2}-t_{\frac{n}{2}+1})+\cdots+\tau_{n-1}(1-t_n-t_{n-1})+\tau_{n}(2t_n-1))(2\tau_n-\tau_{n-1})\, d\tau_{n}\hspace{2mm}=
\]
\[
\ds \displaystyle\int\limits_0^1\,d\tau_1\cdots\int\limits_0^{\tau_{n-1}}\,
f^{(n+2)}\,(\frac{1}{2}+\tau_1(\frac{1}{2}-t_{\frac{n}{2}+1})+\cdots+\tau_{n}(2t_n-1))(2t_n-1)\tau_n(\tau_{n-1}-\tau_n)\, d\tau_{n}\hspace{2mm}\geq 0\,.
\]
\end{proof}
\begin{sublemma}\label{64}
Let $n$ be a positive integer and $\displaystyle t^*_k=\sin^2(k\frac{\pi}{2n})$ for $k=0,1,\ldots,n$. Then
\begin{equation}\label{eq:comp}
\Big{\vert}\,\sum_{k=0}^{n}\frac{\cos((2n-1)\mbox{Arcsin}\sqrt{t^*_k}\,)}{\displaystyle \prod_{0\leq j\leq n\atop j\neq k}(t^*_k-t^*_j)}\,\Big{\vert}\leq 2^{2n-1}.
\end{equation}
\end{sublemma}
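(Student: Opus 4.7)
The plan is to re-express the sum as a classical divided difference at Chebyshev--Lobatto nodes, then read off its value from the discrete cosine transform.

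\textbf{Step 1 (change of variable).} I note first that $\mbox{Arcsin}\sqrt{t_k^*}=k\pi/(2n)$, so
\[
\cos((2n-1)\mbox{Arcsin}\sqrt{t_k^*})=\cos\Big(k\pi-\frac{k\pi}{2n}\Big)=(-1)^k\cos\Big(\frac{k\pi}{2n}\Big).
\]
Next, I perform the substitution $y=1-2t$, which maps the $t_k^*$ to the Chebyshev--Lobatto points $y_k=1-2\sin^2(k\pi/(2n))=\cos(k\pi/n)$. Since $t_k^*-t_j^*=(y_j-y_k)/2$, setting $\tilde f(y_k)=(-1)^k\cos(k\pi/(2n))$ I obtain
\[
\sum_{k=0}^{n}\frac{\cos((2n-1)\mbox{Arcsin}\sqrt{t_k^*})}{\prod_{0\leq j\leq n,\,j\neq k}(t_k^*-t_j^*)}=(-2)^n\,\tilde f[y_0,\ldots,y_n],
\]
the right-hand side being the standard $n$-th divided difference of $\tilde f$.

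\textbf{Step 2 (DCT evaluation).} Let $p_n$ be the unique polynomial of degree $\leq n$ interpolating $\tilde f$ at the Chebyshev--Lobatto nodes $y_0,\ldots,y_n$. By Lemma \ref{2}, $\tilde f[y_0,\ldots,y_n]$ equals the leading coefficient of $p_n$. Expanding $p_n$ in the Chebyshev basis as
\[
p_n(y)=\tfrac{1}{2}c_0+c_1 T_1(y)+\cdots+c_{n-1}T_{n-1}(y)+\tfrac{1}{2}c_n T_n(y),
\]
the discrete orthogonality of the cosines $\cos(jk\pi/n)$ on the Chebyshev--Lobatto grid gives the inversion formula $c_n=\frac{2}{n}\sum_{k=0}^{n}{}''(-1)^k\tilde f(y_k)$, where $\sum''$ denotes halving of the $k=0$ and $k=n$ terms. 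Since $T_n$ has leading coefficient $2^{n-1}$, the leading coefficient of $p_n$ is $2^{n-2}c_n$, so
\[
\tilde f[y_0,\ldots,y_n]=\frac{2^{n-1}}{n}\sum_{k=0}^{n}{}''(-1)^k\tilde f(y_k)=\frac{2^{n-1}}{n}\Big[\frac{1}{2}+\sum_{k=1}^{n-1}\cos\Big(\frac{k\pi}{2n}\Big)\Big],
\]
where I used $(-1)^k\tilde f(y_k)=\cos(k\pi/(2n))$ together with $\cos(\pi/2)=0$ to drop the $k=n$ term.

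\textbf{Step 3 (bound).} Combining Steps 1 and 2,
\[
\Big|\sum_{k=0}^{n}\frac{\cos((2n-1)\mbox{Arcsin}\sqrt{t_k^*})}{\prod_{0\leq j\leq n,\,j\neq k}(t_k^*-t_j^*)}\Big|=\frac{2^{2n-1}}{n}\Big[\frac{1}{2}+\sum_{k=1}^{n-1}\cos\Big(\frac{k\pi}{2n}\Big)\Big].
\]
Since each cosine is strictly less than $1$, the bracket is bounded by $\frac{1}{2}+(n-1)<n$, which yields the desired inequality $\leq 2^{2n-1}$. The main obstacle is the DCT step: one has to make sure that the formula for the leading Chebyshev coefficient of the Lobatto interpolant is correctly justified by the discrete orthogonality
\[
\sum_{k=0}^{n}{}''\cos\Big(\frac{ik\pi}{n}\Big)\cos\Big(\frac{jk\pi}{n}\Big)=\begin{cases}n&\text{if }i=j\in\{0,n\},\\ n/2&\text{if }i=j\notin\{0,n\},\\ 0&\text{if }i\neq j,\end{cases}
\]
which can be verified directly from $2\cos\alpha\cos\beta=\cos(\alpha+\beta)+\cos(\alpha-\beta)$ and summation of geometric series; everything else reduces to elementary trigonometry.
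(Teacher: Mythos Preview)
Your proof is correct and takes a different route from the paper. The paper computes the weights $\gamma_k=\big(\prod_{j\neq k}(t_k^*-t_j^*)\big)^{-1}$ directly via the identity $\sin^2 a-\sin^2 b=\sin(a-b)\sin(a+b)$ and product telescoping, finding $|\gamma_0|=|\gamma_n|=2^{2n-2}/n$ and $|\gamma_k|=2^{2n-1}/n$ for $1\le k\le n-1$, and then bounds the sum trivially by $\sum_k|\gamma_k|=2^{2n-1}$. You instead map to the Chebyshev--Lobatto nodes $y_k=\cos(k\pi/n)$ and extract the divided difference as the top Chebyshev coefficient of the interpolant via discrete cosine orthogonality, obtaining the closed form $(-1)^n\,\frac{2^{2n-1}}{n}\big[\tfrac12+\sum_{k=1}^{n-1}\cos\frac{k\pi}{2n}\big]$ before bounding. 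If one substitutes the paper's explicit $\gamma_k$ back into the sum (using $\cos((2n-1)k\pi/(2n))=(-1)^k\cos(k\pi/(2n))$), one recovers exactly this same closed form, so the two computations are ultimately equivalent; your DCT packaging is more conceptual and delivers the exact value, while the paper's argument is hands-on and stops at the triangle inequality. One minor quibble: Lemma~\ref{2} as stated does not literally assert that the divided difference equals the leading coefficient of the interpolant, but this is immediate from the Newton form (or from part~b) applied to a polynomial, since $p_n^{(n)}$ is constant).
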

\begin{proof}
Setting $\displaystyle \gamma_k=\Big{(}\prod_{0\leq j\leq n\atop j\neq k}(t^*_k-t^*_j)\Big{)}^{-1}$ we have $\displaystyle \gamma_0=(-1)^n\frac{2^{2n-2}}{n}$ and $\displaystyle \gamma_n=(-1)^n \gamma_0$.\\Further for $k=1,2,\ldots,n-1$, using the identity $\sin^2 a-\sin^2 b=\sin(a-b)\sin(a+b)$, we have
\begin{eqnarray*}
\displaystyle \sin(\displaystyle k\frac{\pi}{n}) \frac{1}{\gamma_k}&=&\prod_{0\leq j\leq n\atop j\neq k}\sin((k-j)\frac{\pi}{2n})\prod_{0\leq j\leq n}\sin((k+j)\frac{\pi}{2n})\\
&=&(-1)^{n-k}\negmedspace\prod_{1\leq l\leq k}\negmedspace\sin(l\frac{\pi}{2n})\negmedspace\prod_{1\leq l\leq n-k}\negmedspace\sin(l\frac{\pi}{2n})\negmedspace\prod_{k\leq l\leq n}\sin(l\frac{\pi}{2n})\negmedspace\prod_{n-k < j\leq n}\negmedspace\sin(\pi-(k+j)\frac{\pi}{2n})\\
&=&\Big{(}(-1)^{n-k}\prod_{1\leq l\leq n}\sin^2(l\frac{\pi}{2n})\Big{)}\Big{(}\,\sin(\displaystyle k\frac{\pi}{2n})\,\sin(\displaystyle (n-k)\frac{\pi}{2n})\Big{)}= \sin(\displaystyle k\frac{\pi}{n})\frac{(-1)^k}{2\gamma_0}\\
\end{eqnarray*}
and therefore $\gamma_k=(-1)^k 2\gamma_0$. We bound trivially the right-hand side of~(\ref{eq:comp}) to complete the proof.
\end{proof}
\begin{sublemma}\label{65}
Let $n$ be a positive integer and $f(t)=\cos((2n-1)\mbox{Arcsin}\,\sqrt{t}\,)$. Then
\[
(-1)^n f^{(k)}>0\hspace{2 mm}\mbox{on}\hspace{2mm} [0,1[\hspace{2 mm}\mbox{for}\hspace{2mm} k=n,n+1,\ldots
\]
\end{sublemma}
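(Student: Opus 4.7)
The plan is to write $f$ explicitly as a sum of fractional powers of $1-t$ with signs that alternate in a controlled way, then differentiate term by term and show that every summand in $(-1)^{n}f^{(k)}(t)$ is strictly positive on $[0,1[$.

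The starting observation is that $\cos(m\theta)=T_{m}(\cos\theta)$ together with $\cos(\mbox{Arcsin}\sqrt{t})=\sqrt{1-t}$ gives $f(t)=T_{2n-1}(\sqrt{1-t})$. Since $T_{2n-1}$ is odd of degree $2n-1$, we can write $T_{2n-1}(x)=x\,\tilde Q(x^{2})$ for some polynomial $\tilde Q$ of degree $n-1$. The zeros of $T_{2n-1}$ are the $2n-1$ numbers $\cos\bigl((2k-1)\pi/(2(2n-1))\bigr)$; discarding the root at the origin, the others come in symmetric pairs $\pm\sqrt{y_{k}}$ with $y_{k}=\cos^{2}\bigl((2k-1)\pi/(2(2n-1))\bigr)\in(0,1)$ for $k=1,\dots,n-1$. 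Hence
\[
\tilde Q(y)=2^{2n-2}\prod_{k=1}^{n-1}(y-y_{k}),
\]
and expanding the product using the positivity of the elementary symmetric functions of the $y_{k}$ yields $\tilde Q(y)=\sum_{j=0}^{n-1}c_{j}y^{j}$ with $c_{j}=(-1)^{n-1-j}|c_{j}|$ and $|c_{n-1}|=2^{2n-2}>0$.

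Substituting $y=1-t$, we obtain $\displaystyle f(t)=\sum_{j=0}^{n-1}c_{j}(1-t)^{j+1/2}$. For $k\geq n$ we have $k\geq j+1$ for every $j\in\{0,\ldots,n-1\}$, so the product $(j+\tfrac12)(j-\tfrac12)\cdots(j+\tfrac32-k)$ consists of $j+1$ positive factors $\{1/2,3/2,\dots,j+1/2\}$ and $k-j-1$ negative ones $\{-1/2,-3/2,\dots,-(2k-2j-3)/2\}$, giving
\[
\frac{d^{k}}{dt^{k}}(1-t)^{j+1/2}=(-1)^{j+1}\,\frac{(2j+1)!!\,(2k-2j-3)!!}{2^{k}}\,(1-t)^{j+1/2-k}
\]
(with the convention $(-1)!!=1$). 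Since $c_{j}(-1)^{j+1}=(-1)^{n-1-j}(-1)^{j+1}|c_{j}|=(-1)^{n}|c_{j}|$, summing over $j$ yields
\[
(-1)^{n}f^{(k)}(t)=\sum_{j=0}^{n-1}|c_{j}|\,\frac{(2j+1)!!\,(2k-2j-3)!!}{2^{k}}\,(1-t)^{j+1/2-k},
\]
which is strictly positive on $[0,1[$ (each summand is non-negative, and the $j=n-1$ term is strictly positive since $|c_{n-1}|=2^{2n-2}>0$).

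The only non-routine ingredient is the sign-alternation of the coefficients $c_{j}$, which reduces to the classical fact that the non-zero roots of $T_{2n-1}$ lie in $(-1,1)$; once that is in hand, the rest is a mechanical binomial computation with fractional powers.
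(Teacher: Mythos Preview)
Your argument is correct and takes a genuinely different route from the paper's. The paper identifies $f(t)$ with the Gauss hypergeometric series $F(n-\tfrac12,-n+\tfrac12,\tfrac12,t)=\sum_{k\geq 0}\frac{\alpha_k}{(2k)!}t^k$, where $\alpha_k=\prod_{j=0}^{k-1}(4j^2-(2n-1)^2)$; since each factor with $j\leq n-1$ is negative and each with $j\geq n$ is positive, one sees immediately that $(-1)^n\alpha_m>0$ for all $m\geq n$, and termwise differentiation of the power series in $t$ gives the claim. Your approach instead writes $f(t)=T_{2n-1}(\sqrt{1-t})$ and expands in powers of $1-t$, reducing the sign question to the positivity of the elementary symmetric functions of the squared Chebyshev roots. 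Your method is more elementary (no hypergeometric identity needed) and produces a finite closed form for $f^{(k)}$, while the paper's is shorter once the series is in hand. Both hinge on the same underlying mechanism: a series expansion whose tail beyond index $n$ has coefficients of a fixed sign.
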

\begin{proof}
We have $\displaystyle f(t)=F(n-\frac{1}{2},-n+\frac{1}{2},\frac{1}{2},t)$ where $F$ is Gauss' hypergeometric function [4] and this implies that $\displaystyle f(t)=\sum_{k=0}^{\infty}\frac{\alpha_k}{(2k)!}\,t^k$ where
\[
\alpha_k=\prod_{j=0}^{k-1}\,(4j^2-(2n-1)^2)\,.
\]
Hence $(-1)^n \alpha_k>0$ for $k\geq n$ and this leads to $(-1)^n f^{(k)}>0$ on $[0,1[$ for $k\geq n$.
\end{proof}
\begin{proof}[Proof of Lemma~{\rm\ref{6}}]
To prove (\ref{eq:prod}) we introduce the functions $\displaystyle g(t)=\log\,\sin(\frac{\pi}{2}t)$ and $\displaystyle G(t)=\int_{0}^t g(\tau)\,d\tau$. Since $g'\geq 0$ and $g''< 0$ on $]0,1]$ we have
\begin{eqnarray*}
\sum_{k=1}^n g(\frac{(k-1)\pi+s^*}{a})&\leq &\displaystyle \int\limits_0^{n-\frac{1}{2}} g(\frac{x\pi+s^*}{a})\,dx\,+\,\frac{1}{2}\;g(\,\frac{s^*}{a}\,)=\frac{a}{\pi}\int\limits_{\eta}^{1-\delta+\eta}g(t)\,dt\,+\,\frac{1}{2}\;g(\eta)\\
&=&\frac{a}{\pi}(G(1-\delta+\eta)-G(\eta))\,+\,\frac{1}{2}\;g(\eta)
\end{eqnarray*}
and therefore
\begin{eqnarray*}
\log \Big{(}(-1)^n \displaystyle \prod_{-n\leq k\leq n\atop k\neq 0}\sin(\frac{\pi}{2}\,\frac{x_k^*}{a})\Big{)}&\leq & 2\sum_{k=1}^n g(\frac{(k-1)\pi+s^*}{a}) 
 \leq \frac{2a}{\pi}\,(G(1-\delta+\eta)-G(\eta))\,+\,g(\eta)\\
& = &\frac{2n-1}{c}\,(G(1-\delta+\eta)-G(\eta))\,+g(\eta)\\
& = &\frac{2n-1}{c}\,(c\log 2+G(1-\delta+\eta)-G(\eta))\,-(2n-1)\log 2\,+g(\eta)\\
&=&\frac{2n-1}{c}\,\,h(\delta)\,-(2n-1)\log 2 +\,g(\eta)
\end{eqnarray*}
where
\[
h(\delta)=(1-\delta)\log2+G(1-\delta+(\,\log 2 -\epsilon\,)\frac{\delta}{\vert\log\delta\vert})-G((\,\log 2 -\epsilon\,)\frac{\delta}{\vert\log\delta\vert}).
\]
As $G(1)=-\log 2$ we have $\displaystyle \lim_{\delta \to 0_+}h(\delta)=0$ and $\displaystyle \lim_{\delta \to 0_+}h'(\delta)=-\epsilon$ and there exists $0<\delta_{\epsilon}< \frac{1}{4}$ such that $h<0$ on $]0,\delta_{\epsilon}[$. Moreover $ g(\eta)<g(\frac{1}{8})<\log\frac{1}{2}$ and the bound (\ref {eq:prod}) holds with $c_{\epsilon}=1-\delta_{\epsilon}$.\\
To check (\ref{eq:divided}) we set $t_k(s)=\sin^2(\pi\frac{(k-1)\pi+s}{2a})$ for $s>0$ and $k=1,2,\ldots,n$ and we introduce the function  
\[
\phi(y_0,\ldots,y_n)=(-1)^n\,\sum_{k=0}^n \frac{\cos((2n-1)\mbox{Arcsin}\sqrt{y_k})}{\displaystyle \prod_{0\leq j\leq n\atop j\neq k}\Big{(}y_k-y_j\Big{)}}
\]
defined for $0\leq y_0<y_1<\ldots<y_n\leq 1$.\\
Using Lemma \ref{2} and Sublemma \ref{65} we easily check that $\phi >0$ and $\phi$ is increasing in each argument.
Further $x^*_{-k}=-x^*_k$ and $\displaystyle cx^*_k=\frac{2ca}{\pi}\mbox{Arcsin}\,\sqrt{t_k(s^*)}=(2n-1)\mbox{Arcsin}\,\sqrt{t_k(s^*)}$ for $k~=~1,\ldots,n$ and we have 
\[
\frac{-1}{\sin(c\,a)}\,\sum_{k=-n}^{n}\frac{\cos(c\,x^*_k)}{\displaystyle \prod_{\genfrac{}{}{0 cm}{2}{-n\leq j\leq n}{\,j\neq k}}\Big{(}\sin(\pi \frac{x^*_k}{2a})-\sin(\pi \frac{x^*_j}{2a})\Big{)}}=
\frac{-1}{\sin(c\,a)}\,\sum_{k=0}^{n}\frac{\cos(c\,x^*_k)}{\displaystyle \prod_{\genfrac{}{}{0 cm}{2}{0\leq j\leq n}{\,j\neq k}}\Big{(}\sin^2(\pi \frac{x^*_k}{2a})-\sin^2(\pi \frac{x^*_j}{2a})\Big{)}}
\]
\[
=\phi(0,t_1(s^*),\ldots,t_n(s^*))\leq \phi(0,t_1(s^*),\ldots,t_{n-1}(s^*),1)
\]
since $\sin(ca)=(-1)^{n+1}$.\\
Assume that $n$ is even. Then
\[
t_{\frac{n}{2}-j}(s^*)+t_{\frac{n}{2}+j}(s^*)=1-\cos(\pi \frac{(\frac{n}{2}-1)\pi+s^*}{a})\cos(\pi \frac{j\pi}{a})<1\hspace{3mm}\mbox{for}\hspace{3mm}j=0,1,\ldots,\frac{n}{2}-1
\]
since
\[
\frac{(\frac{n}{2}-1)\pi+s^*}{a}<\frac{c}{2}+\frac{s^*}{a}<\frac{1}{2}-\frac{\delta}{2}+\log 2 \,\frac{\delta}{\vert\log \delta\vert}=\frac{1}{2}+\frac{\delta}{2} (\frac{2\log 2}{\vert\log \delta\vert
} -1)<\frac{1}{2}
\]
as $0<\delta<\frac{1}{4} $. Now we choose $s^{**}$ such that 
\[
\frac{(\frac{n}{2}-1)\pi+s^{**}}{a}=\frac{1}{2}
\]
and since $s^*<s^{**}$ and $t_j(s^*)<t_j(s^{**})$ we get  
\begin{equation}\label{eq:phi}
 \phi(0,t_1(s^*),\ldots,t_{n-1}(s^*),1)<  \phi(0,t_1(s^{**}),\ldots,t_{n-1}(s^{**}),1).
\end{equation}
As $ c<1-\frac{1}{2n}$ we have $a>n\pi$ and $ t_{\frac{n}{2}+j}(s^{**})=\sin^2(\pi(\frac{1}{4}+j\frac{\pi}{2a}))<t^*_{\frac{n}{2}+j}$ for $j=1,2,\ldots,\frac{n}{2}-1$
where $ t^*_k=\sin^2(k\frac{\pi}{2n})$. We complete the proof using Sublemmas \ref{63} and \ref{64} to bound the right-hand side of (\ref{eq:phi}).
If $n$ is an odd integer a similar proof holds.
  \end{proof}
The last point is to bound the integral which appears in the right-hand side of the identity (\ref{eq:main}). This is the content of Lemma \ref{7}.
\begin{sublemma}\label{71}
Let $b_{n,l}$ the numbers defined for integers $n\ge 1$ and $l\ge 0$ by 
\[
b_{n,l}=\Big{(}\frac{2n}{2n+l}\Big{)}^{2 n\log n-1 } \binom{4n+l-1}{ l}\,.
\]
Then there exists a constant $C$ such that $\displaystyle \sum_{l=0}^{\infty}b_{n,l}\leq C $ for all $n\ge 10$.\\\\
\end{sublemma}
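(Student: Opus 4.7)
The goal is to bound $S_n := \sum_{l=0}^\infty b_{n,l}$ by an absolute constant for all $n\ge 10$. Two asymptotic features drive the analysis: for each fixed $l$ and $n\to\infty$ one has $b_{n,l}\to 4^l/l!$ (since $(1+l/(2n))^{-(2n\log n-1)}\sim n^{-l}$ and $\binom{4n+l-1}{l}\sim(4n)^l/l!$), and $\sum_l 4^l/l!=e^4$; while for $n\ge 10$ the strict inequality $\log n>2$ makes the exponent $2n\log n-1$ exceed $4n$, so that $b_{n,l}$ exhibits polynomial decay of order $l^{-2n(\log n-2)}$ for each fixed $n$. The proof thus treats a small-$l$ piece, where the first feature keeps things comparable to $e^4$, and a tail, where the second feature forces convergence.

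Concretely, I would split $S_n=S_n^{\text{small}}+S_n^{\text{tail}}$ at $L=L(n)$ of order $\sqrt{4n/\log n}$. For $l\le L$, I would apply the Taylor inequalities $\log(1+l/(2n))\ge l/(2n)-l^2/(8n^2)$ and $\prod_{k=0}^{l-1}(1+k/(4n))\le\exp(l(l-1)/(8n))$ to obtain
\[
\Big(\tfrac{2n}{2n+l}\Big)^{2n\log n-1}\le n^{-l}\exp\!\big(l^2\log n/(4n)+O(l/n)\big), \qquad \binom{4n+l-1}{l}\le\tfrac{(4n)^l}{l!}\,\exp(l^2/(8n)).
\]
On the range $l\le L$ both exponential corrections are $O(1)$, giving $b_{n,l}\le K_1\cdot 4^l/l!$ for an absolute constant $K_1$, and hence $S_n^{\text{small}}\le K_1 e^4$. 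For the tail $l>L$, I would use the identity $\binom{4n+l-1}{l}=\binom{4n+l-1}{4n-1}\le(e(4n+l-1)/(4n-1))^{4n-1}$ together with $(2n/(2n+l))^{2n\log n-1}\le(2n/l)^{2n\log n-1}$ (valid for $l\ge 2n$, with an intermediate estimate below) to produce
\[
b_{n,l}\le \frac{(2n)^{2n\log n-1}e^{4n-1}}{(4n-1)^{4n-1}}\cdot l^{-2n(\log n-2)}.
\]
Stirling applied to $(4n-1)!$ cancels the super-exponential growth inherited from $(2n)^{2n\log n-1}$, and summing $l^{-2n(\log n-2)}$ by comparison with the Riemann integral contributes $L^{1-2n(\log n-2)}$. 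A careful book-keeping of these factors (relying only on $\log n>2$ for $n\ge 10$) shows the tail is bounded uniformly in $n$.

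The main obstacle is the intermediate regime $L\lesssim l\lesssim 2n$, where neither the Taylor-based small-$l$ bound nor the crude polynomial tail bound is sharp. To close the argument in this band I would use the global inequality $\log(1+x)\ge 2x/(2+x)$, yielding $(2n/(2n+l))^{2n\log n-1}\le\exp\!\big(-2l(2n\log n-1)/(4n+l)\big)$, combined with $\binom{4n+l-1}{l}\le(e(4n+l-1)/l)^l$, so that $\log b_{n,l}\le l\,[1+\log((4n+l-1)/l)-(4n\log n-2)/(4n+l)]$. For $n\ge 10$ and $l$ in the intermediate range, the bracket is strictly negative (it is precisely the condition $\log n>2$ that forces this), providing geometric decay that dovetails with both pieces. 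Adding the three contributions gives $S_n\le C$ with $C$ an absolute constant.
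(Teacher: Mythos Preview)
Your approach is genuinely different from the paper's. The paper proves the much stronger pointwise monotonicity $b_{n,l}\le b_{10,l}$ for all $n\ge 10$ and $l\ge 0$: writing $g(x,y)=\log b_{x,y}$ with the binomial coefficient replaced by a ratio of Gamma functions, one computes $\partial_x g(x,0)=0$ and shows $\partial_y\partial_x g(x,y)\le 0$ for $x\ge 10$, $y\ge 0$ (using $\Psi'(z)\le 1/z+1/z^2$), whence $\partial_x g\le 0$. This reduces the whole problem to the single series $\sum_l b_{10,l}$, whose convergence is immediate from Stirling since $b_{10,l}\asymp l^{-20\log 10+40}\asymp l^{-6.05}$.

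Your direct three-range estimate, by contrast, has a genuine gap in the intermediate regime. The bracket
\[
1+\log\!\Big(\frac{4n+l-1}{l}\Big)-\frac{4n\log n-2}{4n+l}
\]
is \emph{not} negative for $n\ge 10$: at $n=10$ it is positive for every $l\ge 1$ (for instance at $l=20$ one gets $1+\log(59/20)-90.1/60\approx 0.58$, and the expression stays above $0.5$ for $l$ between $20$ and $100$). The condition $\log n>2$ makes the tail exponent $2n(\log n-2)$ positive, but it does not force the bracket negative; numerically the bracket only becomes negative throughout the range $L\lesssim l\lesssim 2n$ once $n$ is roughly $30$ or more, and even then not all the way down to your $L\sim\sqrt{4n/\log n}$. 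Consequently the ``geometric decay'' you invoke does not exist for $n$ near $10$, and the tail bound starting at $l=2n$ is far too weak there (the prefactor $(2n)^{2n\log n-1}e^{4n-1}/(4n-1)^{4n-1}$ is of order $e^{95}$ at $n=10$, and $\sum_{l\ge 20}l^{-6.05}$ does not come close to compensating).

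Your strategy can be repaired by treating a finite initial segment $10\le n\le N_0$ separately---each such $S_n$ is a single convergent series and hence a fixed constant---and then running your estimates only for $n>N_0$ with $N_0$ chosen large enough that the bracket really is negative on the whole intermediate range. But as written, the claim that $\log n>2$ suffices is false, and the argument does not close. The paper's monotonicity trick sidesteps all of this.
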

\begin{proof}
Since
\[
b_{n,l}=\Big{(}\frac{2n}{2n+l}\Big{)}^{2 n\log n -1} \frac{\Gamma(4n+l)}{\Gamma(l+1)\Gamma(4n)}
\]
we have, using Stirling's formula
\[
b_{n,l}=O_n(l^{-2n\log n+4n})=O_n(l^{-2})
\]
for $n\geq 10$
and there exists a constant $C$ such that 
$\displaystyle \sum_{l=0}^{\infty}b_{10,l}\leq C$. We now show that $b_{n,l}\leq b_{10,l}$ for $n\geq 10$.
We have $\log b_{n,l}=g(n,l)$ where the function g is defined for $(x,y)\in [1,\infty[\times[0,\infty[ $  by
\[
g(x,y)=(2x\log x-1)\log(\frac{2x}{2x+y})+\log \Gamma(4x+y)-\log\Gamma(y+1)-\log\Gamma(4x).
\]
Straightforward computations lead to
\[
\frac{\partial g}{\partial x} (x,y)=(2\log x +2)\log(\frac{2x}{2x+y})+(2x\log x -1)\frac{y}{x(2x+y)}+4\Psi(4x+y)-4\Psi(4x)
\]
and
\[
\frac{\partial^2 g}{\partial y \partial x}(x,y)=-\frac{2(1+2x+y+y\log x)}{(2x+y)^2}+4\Psi'(4x+y)
\]
where $\Psi$ is the derivative of $\log \Gamma$. From now on we assume that $(x,y)\in [10,\infty[\times[0,\infty[$. We have $\displaystyle \frac{\partial g}{\partial x} (x,0)=0$ and since $\displaystyle\Psi'(z)=\sum_{k=0}^{\infty}\frac{1}{(z+k)^2}$ we get $\displaystyle \Psi'(z)\leq \frac{1}{z}+\frac{1}{z^2}$ for $z>0$ and therefore
\begin{eqnarray*}
\frac{\partial^2 g}{\partial y \partial x}(x,y)&\leq &-\frac{2(1+2x+y+y\log x)}{(2x+y)^2}+\frac{4}{4x+y}+\frac{4}{(4x+y)^2}\\
& = &\frac{2(-8x^2+y^2+8x^2y+6xy^2+y^3)}{(2x+y)^2(4x+y)^2}-\frac{2y\log x}{(2x+y)^2}\leq
\frac{2y(1-\log x)}{(2x+y)^2}\leq 0\,.
\end{eqnarray*}
Hence $ \frac{\partial g}{\partial x} (x,y)\leq  \frac{\partial g}{\partial x} (x,0)=0$ and this implies that $g(x,y)\leq g(10,y)$ and hence $b_{n,l}\leq b_{10,l}$ for $n\geq 10$.\\
\end{proof}
\vspace{3mm}
\begin{lemma}\label{7}
Let $n$ and $m$ be integers such that $n\geq 10$ and $m\geq n\log n $ and let further $\Psi_{2l-1}^*$ be the function defined in Lemma \ref{3}. Then there exists a constant $C$ such that
\[
\Big{\vert}\Psi_{2m-1}^*(x_{-n},\ldots,x_n,x)\Big{\vert}\leq \;\frac{2^{2n-1}}{\alpha_0\,a} \Big{(}\frac{a}{n\pi}\Big{)}^{2m}C
\]
for $(x_{-n},\ldots,x_n,x)\in \Omega \times [-a,a]$.
\end{lemma}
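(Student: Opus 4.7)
\emph{Reduction to a single series.} I start from the Fourier/Chebyshev series representation~(\ref{eq:tcheby}) derived in the proof of Lemma~\ref{5}~c):
\begin{equation*}
\Psi^*_{2m-1}(x_{-n},\ldots,x_n,x)=(-1)^{m+1}\,\frac{2\,(2a)^{2m-1}}{\alpha_0\,\pi^{2m}}\sum_{j=2n}^{\infty}\frac{1}{j^{2m}}\Bigl(\sum_{k=-n}^{n}\alpha_k a_{j,k}\Bigr)\cos\!\bigl(j\pi(\tfrac{1}{2}+\tfrac{x}{2a})\bigr),
\end{equation*}
with $a_{j,k}=(-1)^j T_j(\sin(\pi x_k/(2a)))$. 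Bounding $|\cos|\le 1$ reduces the whole task to estimating $S_j:=\bigl|\sum_{k}\alpha_k a_{j,k}\bigr|$ uniformly in $j\ge 2n$. Setting $t_k:=\sin(\pi x_k/(2a))\in(-1,1)$, $S_j$ is, up to the sign $(-1)^j$, precisely the divided difference of $T_j$ at the nodes $t_{-n},\ldots,t_n$, so by Lemma~\ref{2}~b) there exists $\eta\in(-1,1)$ with $S_j=|T_j^{(2n)}(\eta)|/(2n)!$.

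\emph{Chebyshev derivatives.} Next I invoke the fact that $\max_{[-1,1]}|T_j^{(k)}|=T_j^{(k)}(1)$ (V.\,A.~Markov's inequality applied to $T_j$ itself) together with the classical evaluation $T_j^{(2n)}(1)=\prod_{i=0}^{2n-1}(j^2-i^2)/(2i+1)$. A direct simplification using $\prod_{i=0}^{2n-1}(j-i)(j+i)=j(j+2n-1)!/(j-2n)!$ and $\prod_{i=0}^{2n-1}(2i+1)=(4n)!/(2^{2n}(2n)!)$ yields
\begin{equation*}
\frac{T_j^{(2n)}(1)}{(2n)!}=\frac{2^{2n}\,j}{4n}\binom{j+2n-1}{4n-1}.
\end{equation*}
Substituting $j=2n+l$ and factoring out $(2n)^{-(2m-1)}$ transforms the tail into
\begin{equation*}
\sum_{j=2n}^{\infty}\frac{S_j}{j^{2m}}\le \frac{2^{2n}}{4n\,(2n)^{2m-1}}\sum_{l=0}^{\infty}\Bigl(\tfrac{2n}{2n+l}\Bigr)^{2m-1}\binom{4n+l-1}{l}.
\end{equation*}

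\emph{Applying Sublemma~\ref{71} and finishing.} Because $m\ge n\log n$ and each factor $2n/(2n+l)\le 1$, I lower the exponent $2m-1$ to $2n\log n-1$; the resulting series is exactly $\sum_l b_{n,l}$ and is bounded by the absolute constant $C$ of Sublemma~\ref{71}. The elementary identity
\begin{equation*}
\frac{2\,(2a)^{2m-1}}{\pi^{2m}}\cdot\frac{2^{2n}}{4n\,(2n)^{2m-1}}=\frac{2^{2n-1}}{a}\Bigl(\frac{a}{n\pi}\Bigr)^{2m}
\end{equation*}
then delivers the claimed inequality. The bound is obtained first on $\omega\times[-a,a]$, and extends to $\Omega\times[-a,a]$ by continuity: $\Psi^*_{2m-1}$ is continuous there by Lemma~\ref{3}~b) (whose hypothesis $m\ge n+1$ holds since $m\ge n\log n$ and $n\ge 10$), and $\alpha_0$ is continuous and nonvanishing on $\Omega$ because no $x_j$ with $j\ne 0$ equals $0$.

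The main technical hurdle is the second step: one has to recognise the exact combinatorial form of $T_j^{(2n)}(1)/(2n)!$ so that the resulting tail sum matches precisely the series $\sum_l b_{n,l}$ controlled by Sublemma~\ref{71}. Once that identification is in place, convergence of the $j$-series is automatic and the rest is pure bookkeeping.
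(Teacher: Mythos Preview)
Your proof is correct and follows essentially the same route as the paper: start from the Chebyshev expansion~(\ref{eq:tcheby}), bound the inner sum as a divided difference of $T_j$ via Lemma~\ref{2}, use $\max_{[-1,1]}|T_j^{(2n)}|=T_j^{(2n)}(1)$ with its explicit binomial evaluation, substitute $j=2n+l$, and finish with Sublemma~\ref{71}. Your formula $T_j^{(2n)}(1)/(2n)!=\tfrac{2^{2n}j}{4n}\binom{j+2n-1}{4n-1}$ is just a rewriting of the paper's $T_j^{(2n)}(1)=2^{2n-1}(2n-1)!\,j\binom{2n+j-1}{j-2n}$, and your final bookkeeping identity checks out; the only addition is that you spell out the passage from $\omega$ to $\Omega$ by continuity, which the paper leaves implicit.
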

\begin{proof}
For $(x_{-n},\ldots,x_n,x)\in \omega \times [-a,a]$ we use relation (\ref{eq:tcheby}) and Lemma \ref{2} to get 
\[ 
\Big{\vert}\Psi_{2m-1}^*(x_{-n},\ldots,x_n,x) \Big{\vert}\leq\, 2\, \frac{(2a)^{2m-1}}{\alpha_0 \pi^{2m}}\,\sum_{j=2n}^{\infty}\,\frac{1}{j^{2m}}\;\frac{\Big{\vert}T_j^{(2n)}(\tau_j)\Big{\vert}}{(2n)!}
\]
for some $\tau_j \in ]\negmedspace-1,1[$.
It is well known that [8]
\[
\displaystyle \max_{-1\leq x\leq 1} \Big{\vert}T_j^{(2n)}(x)\Big{\vert}= T_j^{(2n)}(1)=2^{2n-1}\,(2n-1)!\,j \binom{2n+j-1}{ j-2n}
\]
for $j=2n,2n+1,\ldots$ and therefore
\[
\Big{\vert}\Psi_{2m-1}^*(x_{-n},\ldots,x_n,x) \Big{\vert}\leq \frac{2^{2n-1}}{\alpha_0\,a} \Big{(}\frac{a}{n\pi}\Big{)}^{2m}\sum_{j=2n}^{\infty}\Big{(}\frac{2n}{j}\Big{)}^{2m-1}\binom{2n+j-1}{ j-2n}.
\]
We set $j=2n+l$ and since $m\geq n\log n$  we have
\begin{eqnarray*}
\Big{\vert}\Psi_{2m-1}^*(x_{-n},\ldots,x_n,x) \Big{\vert}&\leq& \frac{2^{2n-1}}{\alpha_0\,a} \Big{(}\frac{a}{n\pi}\Big{)}^{2m}\sum_{l=0}^{\infty}\Big{(}\frac{2n}{2n+l}\Big{)}^{2m-1}\binom{4n+l-1}{l}\\
&\leq&\frac{2^{2n-1}}{\alpha_0\,a} \Big{(}\frac{a}{n\pi}\Big{)}^{2m}\sum_{l=0}^{\infty}b_{n,l}\,\leq \,\frac{2^{2n-1}}{\alpha_0\,a} \Big{(}\frac{a}{n\pi}\Big{)}^{2m}C
\end{eqnarray*}
thanks to Sublemma \ref{71}.
\end{proof}
\section{Proofs of Theorems and Conclusion} 
\begin{proof}[Proof of Theorem \ref{mainth}]
Assume that Theorem \ref{mainth} is not true. Then there exist $(2\log2)^{-1}<C<1$ and arbitrary large $T$ such that
\[
\max_{t\in [T-2\pi,T+2\pi]}\vert Z^{(k)}(t)\vert < \,\Big{(}1-\frac{\log\log\log T}{\log\log T}\Big{)}^k \Big{(}\log \sqrt{\frac{T}{2\pi}}\,\Big{)}^k\,\vert Z(T)\vert
\]
for $k=1,3,5,\ldots,2m-1,\,2m$ where $m=\lfloor C \log T \log\log T\rfloor$. For such a $T$ we have $Z(T)\neq 0$ and we introduce the function $f$ defined~by
 \[ 
 \displaystyle f(t) = \frac{Z(T+\displaystyle \frac{t}{\log \sqrt{\frac{T}{2\pi}}})}{Z(T)}\,.
 \]
It satisfies $f(0)=1$ and its zeros $x_{\pm k}$ numbered such that $ \ldots \leq x_{-1}<0<x_1\leq\ldots$ are such that
 \[
 \mid x_{\pm k}\mid \leq (k-1)\,\pi+\Big{(}\frac{\pi}{2}+o(1)\Big{)}\frac{\log T}{\log\log T}\hspace{3mm}\mbox{for }k=1,2,\cdots,\lfloor \sqrt{T}\rfloor.
 \]
 One easily checks that function $f$ satisfies the assumptions of Theorem \ref{genth} for large $T$  with
\[
n= \lfloor C \log T\rfloor\,,\hspace{5mm} c= 1-\frac{\log\log\log T}{\log\log T}\,\hspace{5 mm}s= \Big{(}\frac{\pi}{2}+o(1)\Big{)}\frac{\log T}{\log\log T}
\]
and relation (\ref{eq:lower}) leads to
\[
\Big{(}\frac{\pi}{2}+o(1)\Big{)}\frac{\log T}{\log\log T}\geq (\log 2 -\epsilon)\frac{\lfloor C\log T\rfloor}{\log\log T}\pi
\]
which is a contradiction for $\epsilon$ sufficiently small and $T$ sufficiently large.
\end{proof}
\begin{proof}[Proof of Theorem \ref{genth}]
Let $f$ be a function satisfying the assumptions of Theorem \ref{genth} for the parameters $n$, $c$ and $s$. We fix $0<\epsilon<\log 2$ and assume that $\displaystyle c_{\epsilon}<c<1-\frac{1}{2n}$ where $c_{\epsilon}$ is defined in Lemma \ref{6}.\\
We set $a=(n-\frac{1}{2})\frac{\pi}{c}$ and we suppose that $s<s^*$ where
\[
s^*=(\log 2 -\epsilon)\frac{1-c}{\vert\log(1-c)\vert}\,a
\]
to get a contradiction. 
Introducing $x_k^*= \mbox{sgn}(k)((\vert k\vert -1)\pi+s^*)$ with $\mbox {sgn}(0)=0$ and using identity (\ref{eq:main}) and Lemmas \ref{6} and \ref{7}  we have
\begin{eqnarray*}
f(0)&\leq &\sum_{k=1}^{m}(-1)^{n+k+1}\Psi_{2k-1}^*(x_{-n},\ldots,x_n,a)c^{2k-1}+\sum_{k=1}^{m}(-1)^{n+k+1}\Psi_{2k-1}^*(x_{-n},\ldots,x_n,-a)c^{2k-1}\\
&+&\Big{\vert}\int\limits_{-a}^a f^{(2m)}(x)\Psi_{2m-1}^*(x_{-n},\ldots,x_n,x)\,dx\,\Big{\vert}\\
&\leq &\sum_{k=1}^{m}(-1)^{n+k+1}\Psi_{2k-1}^*(x_{-n}^*,\ldots,x_n^*,a)c^{2k-1}+\sum_{k=1}^{m}(-1)^{n+k+1}\Psi_{2k-1}^*(x_{-n}^*,\ldots,x_n^*,-a)c^{2k-1}\\
&+&2^{2n}\Big{\vert}\displaystyle\prod_{-n\leq j\leq n\atop j\neq 0}\sin(\pi\,\frac{x_j^*}{2a})\Big{\vert}\Big{(}1-\frac{1}{2n}\Big{)}^{2n\log n}C\\
&<&\frac{1}{2}+\Big{(}1-\frac{1}{2n}\Big{)}^{2n\log n}C<1
\end{eqnarray*}
for $n$ sufficiently large. This is a contradiction since $f(0)=1$ and therefore
\[
s\geq s^*\geq (\log 2 -\epsilon)\frac{1-c}{\vert\log(1-c)\vert}\,n\,\pi
\]
as $a>n\pi$ since $c<1-\frac{1}{2n}$.
\end{proof}
The bound given in Lemma \ref{7} does not take into account the repartition of $x_k$ and Theorem~\ref{genth} should hold under the weaker assumption $m \geq n$. It should also be possible to use deep properties of the argument of the zeta function to get a stronger version of Theorem~\ref{mainth}.\\\\ 
 {\bf Acknowledgements} \\
 I am grateful to Professor Jean Descloux for his encouragements and to my colleague, Jean-François Hêche, for his valuable comments.

\end{document}